\numberwithin{equation}{section}  
\newtheorem{lem}{Lemma}[section]
\newtheorem{thm}[lem]{Theorem}
\newtheorem{dfn}[lem]{Definition}
\newtheorem{rmk}[lem]{Remark}
\newtheorem{prp}[lem]{Proposition}
\newtheorem{cor}[lem]{Corollary}
\newtheorem{exm}[lem]{Example}
\newcommand{\ii}{\sqrt{-1}}   
\newcommand{\ba}{\bar{\alpha}}   
\newcommand{\bb}{\bar{\beta}}
\newcommand{\bg}{\bar{\gamma}}
\begin{document}

\title{Liouville theorem for Pseudoharmonic maps from Sasakian manifolds\footnote{Supported by NSFC grant No. 11271071} \footnote{Key words: Liouville theorem, pseudoharmonic map, sub-gradient estimate, Sasakian manifold, Heisenberg group} \footnote{2010 Mathematics Subject Classification. Primary: 32V05, 32V20. Secondary: 58E20}}
\author{Yibin Ren\footnote{School of Mathematical Science, Fudan University, Shanghai 200433, P.R.China. E-mail: allenrybqqm@hotmail.com} \and Guilin Yang \and Tian Chong}
\date{}
\maketitle

\begin{abstract}
In this paper, we derive a sub-gradient estimate for pseudoharmonic maps from noncompact complete Sasakian manifolds which satisfy CR sub-Laplace comparison property, to simply-connected Riemannian manifolds with nonpositive sectional curvature. As its application, we obtain some Liouville theorems for pseudoharmonic maps. In the Appendix, we modify the method and apply it to harmonic maps from noncompact complete Sasakian manifolds.
\end{abstract}

\section{Introduction}

In \cite{y}, S. T. Yau derived a well-known gradient estimate for harmonic functions on complete noncompact Riemnnian manifolds. By this estimate, he got a Liouville theorem for positive harmonic functions on Riemannian manifolds with nonnegative Ricci curvature. In \cite{c}, S. Y. Cheng generlized the method in \cite{y} to harmonic maps. In \cite{ckt}, S. C. Chang, T. J. Kuo and J. Tie modified the method in \cite{y} and applied it to positive pseudohamonic functions on noncompact Sasakian $(2n+1)$-manifolds. They introduced a new auxiliary function and successfully dealt with the awkward term in Bochner-type formula. As a result, they obtained a sub-gradient estimate and Liouville theorem for positive pseudoharmonic functions.

In this paper, inspired by \cite{c, ckt}, we derive a sub-gradient estimate for pseudoharmonic maps from noncompact complete Sasakian manifolds which satisfies CR sub-Laplace comparison property (Theorem \ref{nsg}). Then we get the Liouville theorem for pseudoharmonic maps (Theorem \ref{cse}). In the Appendix, we apply the method to harmonic maps from noncompact complete Sasakian manifolds and derive a Reeb energy density estimate (Theorem \ref{csh}). From this estimate, we can prove Liouville theorem for harmonic maps on Sasakian manifolds.

\section{Basic Notions} \label{crb}
A smooth manifold $M$ of real dimension $(2n+1)$ is said to be a CR manifold, if there exists a smooth rank $n$ complex subbundle $T_{1,0} M \subset TM \otimes \mathbb{C}$ such that
$$
T_{1,0} M \cap T_{0,1} M =0
$$
and
$$
[\Gamma (T_{1,0} M), \Gamma (T_{1,0} M)] \subset \Gamma (T_{1,0} M)
$$
where $T_{0,1} M = \overline{T_{1,0} M}$ is the complex conjugate of $T_{1,0} M$. If $M$ is a CR manifold, then its Levi distribution is the real subbundle $HM = Re \: \{ T_{1,0}M \oplus T_{0,1}M \}$.
It carries a complex structure $J_b : HM \rightarrow HM$, which is given by $J_b (X+\overline{X})= \ii (X-\overline{X})$ for any $X \in T_{1,0} M$. Since $HM$ is naturally oriented by the complex structure, then $M$ is orientable if and only if
there exists a global nonvanishing 1-form $\theta$ such that $\theta (HM) =0 $.
Any such section $\theta$ is referred to as a pseudo-Hermitian structure on $M$. The Levi form $L_\theta $ is given by
$$L_\theta (Z,\overline{W} ) = - \ii d \theta (Z, \overline{W})  $$
for any $Z , W \in T_{1, 0} M$.
\begin{dfn}
An orientable CR manifold $M$ with a pseudo-Hermitian structure $\theta$, denoted by $(M, HM, J_b, \theta)$, is called a pseudo-Hermitian manifold. A pseudo-Hermitian manifold $(M, HM, J_b, \theta)$  is said to be a strictly pseudoconvex CR manifold if its Levi form $L_\theta$ is positive definite.
\end{dfn}

If $(M, HM, J_b, \theta)$ is strictly pseudoconvex, there exists a unique nonvanishing vector field $T$, transverse to $HM$, satisfying
$T \lrcorner \: \theta =1, \ T \lrcorner \: d \theta =0$. This vector field is called the characteristic direction of $(M, HM, J_b, \theta)$.
Define the bilinear form $G_\theta$ by
$$
G_\theta (X, Y)= d \theta (X, J_b Y)
$$
for $X, Y \in HM$. Since $L_\theta$ and $G_\theta$ coincide on $T_{1,0} M \otimes T_{0,1} M$, $G_\theta$ is also positive definite on $HM \otimes HM$. This allows us to define a Riemannian metric $g_\theta$ on $M$ by
$$g_\theta (X, Y) = G_\theta (\pi_H X, \pi_H Y)+ \theta(X) \theta (Y), \quad X, Y \in  TM$$
where $\pi_H : TM \rightarrow HM$ is the projection associated to the direct sum decomposition $TM = HM \oplus \mathbb{R} T$. This metric is usually called the Webster metric.

On a strictly pseudoconvex CR manifold, there exists a canonical connection preserving the complex structure and the Webster metric. Actually

\begin{prp} [\cite{dt}]
Let $(M, HM, J_b, \theta)$ be a strictly pseudoconvex CR manifold. Let $T$ be the characteristic direction and $J_b$ the complex structure in $HM$ (extending to an endomorphism of $TM$ by requiring that $J_b T=0$). Let $g_\theta$ be the Webster metric. Then there is a unique linear connection $\nabla$ on $M$ (called the Tanaka-Webster connection) such that:
\begin{enumerate}[(i)]
\item The Levi distribution HM is parallel with respect to $\nabla$.
\item $\nabla J_b=0$, $\nabla g_\theta=0$.
\item The torsion $T_\nabla$ of $\nabla$ satisfies $T_{\nabla} (X, Y)= 2 d \theta (X, Y) T  $ and $T_{\nabla} (T, J_b X) + J_b T_{\nabla} (T, X) =0 $ for any $X, Y \in HM$.
\end{enumerate}
\end{prp}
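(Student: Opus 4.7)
The plan is to mimic the classical derivation of the Levi-Civita connection via the Koszul formula, adapted to account for (i) the splitting $TM = HM \oplus \mathbb{R}T$, (ii) the prescribed nonzero torsion, and (iii) the parallelism of $HM$, $J_b$ and $g_\theta$. I would prove uniqueness first, obtaining explicit formulas for $\nabla$, then define $\nabla$ by those formulas and verify the axioms.

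For uniqueness on $HM\times HM$, let $X,Y,Z\in\Gamma(HM)$ and expand $Xg_\theta(Y,Z)$, $Yg_\theta(Z,X)$, $Zg_\theta(X,Y)$ using $\nabla g_\theta=0$. Cyclically combine the three equations and substitute
\[
\nabla_X Y-\nabla_Y X = [X,Y] + 2d\theta(X,Y)T,
\]
which comes from the first torsion condition. Because $\nabla_X Y\in\Gamma(HM)$ by (i) and $g_\theta(T,Z)=\theta(Z)=0$, the $T$-terms disappear from the inner products, leaving a Koszul-type identity that determines $g_\theta(\nabla_X Y,Z)$, and hence $\nabla_X Y$, in terms of $g_\theta$, Lie brackets and $d\theta$.

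Next I would pin down the $T$-derivatives. Applying $\nabla g_\theta=0$ to $g_\theta(T,\cdot)=\theta$ shows $\nabla\theta=0$, which together with (i) forces $\nabla_X T\in\Gamma(HM)$ and $\nabla_T T=0$. The torsion relation $\nabla_X T-\nabla_T X=[X,T]$ combined with $\nabla J_b=0$ and the second torsion axiom $T_\nabla(T,J_b X)+J_b T_\nabla(T,X)=0$ yields a linear system which determines $\nabla_T X$ and $\nabla_X T$ uniquely in terms of $[T,X]$ and $J_b[T,J_b X]$. Concretely, $\nabla_T X$ is forced to be the ``Hermitian part'' of $[T,X]$ with respect to $J_b$, while the rest becomes $\nabla_X T$ (essentially the pseudo-Hermitian torsion $\tau$).

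For existence, define $\nabla$ by the formulas just derived and verify (i)--(iii) directly. Compatibility with $g_\theta$ and the torsion identities are immediate from the construction, and the parallelism of $HM$ follows since all formulas take values in $HM$. The main obstacle will be checking $\nabla J_b=0$, since the Koszul-type formula on $HM$ does not manifestly commute with $J_b$. The key identity required is
\[
d\theta(J_b X, J_b Y)=d\theta(X,Y)\qquad \text{for }X,Y\in HM,
\]
which holds because $L_\theta$ is Hermitian and $G_\theta(X,Y)=d\theta(X,J_b Y)$ is symmetric. Coupled with the CR integrability $[\Gamma(T_{1,0}M),\Gamma(T_{1,0}M)]\subset\Gamma(T_{1,0}M)$, which controls the $J_b$-components of brackets, this yields $\nabla_X(J_b Y)=J_b\nabla_X Y$ on $HM$. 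In the $T$-direction the compatibility is built into the second torsion axiom, which was designed precisely for this purpose. Uniqueness together with this verification gives the proposition.
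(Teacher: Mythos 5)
The paper does not actually prove this proposition (it is quoted from Dragomir--Tomassini), so your argument has to stand on its own. Its architecture --- uniqueness via a Koszul-type formula adapted to the prescribed torsion, then existence by verifying the axioms --- is the standard route, and the horizontal part is sound: for $X,Y,Z\in\Gamma(HM)$ the $T$-terms coming from $T_\nabla(X,Y)=2d\theta(X,Y)T$ are annihilated by $g_\theta(T,Z)=0$, and $g_\theta(\nabla_XY,Z)$ for $Z\in HM$ does determine $\nabla_XY$ because axiom (i) puts $\nabla_XY$ in $HM$, where $g_\theta$ is positive definite.

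The genuine error is in the $T$-direction. Metric compatibility applied to $g_\theta(T,T)=1$ and to $g_\theta(T,Y)=0$ (with (i)) gives $\nabla_XT\in\Gamma(HM)$, and $\nabla J_b=0$ applied to $J_bT=0$ gives $J_b\nabla_XT=0$; since $J_b$ is injective on $HM$, the axioms force $\nabla T=0$ identically. Hence $[T,X]=\nabla_TX-T_\nabla(T,X)$: the $J_b$-linear part of $X\mapsto[T,X]$ is $\nabla_TX$, exactly as you say, but the $J_b$-antilinear remainder is $-\tau(X)=-T_\nabla(T,X)$ and lives in the \emph{torsion tensor}, not in $\nabla_XT$. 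Your assignment ``the rest becomes $\nabla_XT$'' yields a connection with $\nabla_XT=\tau(X)$, which violates both $\nabla J_b=0$ (since $0=\nabla_X(J_bT)\neq J_b\tau(X)$) and $\nabla g_\theta=0$ (since then $(\nabla_Xg_\theta)(T,Y)=-g_\theta(\tau(X),Y)\neq0$) whenever $\tau\neq0$; it only happens to work in the Sasakian case, whereas the proposition concerns general strictly pseudoconvex CR manifolds. Relatedly, the identity $\nabla_XT-\nabla_TX=[X,T]$ from which you start is not an axiom: it presupposes $T_\nabla(T,X)=0$ and would render axiom (iii)(b) vacuous. A smaller but real gap: verifying $\nabla J_b=0$ in horizontal directions for the Koszul-defined connection requires more than $d\theta(J_bX,J_bY)=d\theta(X,Y)$; it is precisely where the integrability hypothesis $[\Gamma(T_{1,0}M),\Gamma(T_{1,0}M)]\subset\Gamma(T_{1,0}M)$ (vanishing of the Nijenhuis-type obstruction) must be used in earnest, and your sketch only gestures at this.
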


The pseudo-Hermitian torsion, denoted $\tau$, is the $TM$-valued 1-form defined by $\tau(X) = T_{\nabla} (T,X)$. Note that $\tau(T_{1,0} M) \subset T_{0,1} M$ and $\tau$ is $g_\theta$-symmetric (cf. \cite{dt}).

\begin{prp}[\cite{dt}]
If $(M, HM, J_b, \theta)$ is a strictly pseudoconvex CR manifold, the synthetic object $(J_b, -T, -\theta, g_\theta)$ is a contact metric structure on $M$. This contact metric structure is a Sasakian structure if and only if the pseudo-Hermitian torsion $\tau$ is zero.
\end{prp}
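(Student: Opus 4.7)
The plan is to verify the four axioms of a contact metric structure directly, and then reduce the Sasakian condition to $\tau\equiv 0$ via the standard normality characterization. Throughout, set $\phi := J_b$ (extended to $TM$ by $J_b T = 0$), $\xi := -T$, $\eta := -\theta$, and $g := g_\theta$.

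First I would check the algebraic axioms. The identity $\eta(\xi) = \theta(T) = 1$ is immediate. Since $\phi^2 X = -X$ for $X\in HM$ and $\phi^2 T = 0 = -T + \theta(T)T$, one gets $\phi^2 = -I + \eta\otimes\xi$. For the metric compatibility $g(\phi X,\phi Y) = g(X,Y) - \eta(X)\eta(Y)$, decompose along $TM = HM \oplus \mathbb{R} T$: the cross terms involving $T$ drop out because $\phi T = 0$ and $\theta\circ J_b = 0$, so the identity reduces to $G_\theta(J_b X, J_b Y) = G_\theta(X,Y)$ on $HM$, which follows from $G_\theta(X,Y) = d\theta(X, J_b Y)$ together with $J_b^2 = -I$ on $HM$. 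Finally, the fundamental relation $d\eta(X,Y) = g(X,\phi Y)$, i.e.\ $-d\theta(X,Y) = g_\theta(X, J_b Y)$, holds trivially whenever $X$ or $Y$ equals $T$ (by $T\lrcorner d\theta = 0$, $\phi T = 0$, $g_\theta(T, HM) = 0$), and for $X, Y \in HM$ it follows from $g_\theta(X, J_b Y) = G_\theta(X, J_b Y) = d\theta(X, J_b^2 Y) = -d\theta(X,Y)$. This establishes the contact metric structure.

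For the Sasakian characterization, I would invoke the classical fact (Blair; see also \cite{dt}) that a contact metric structure is Sasakian if and only if it is normal, meaning
\begin{equation*}
N_\phi(X,Y) + 2\, d\eta(X,Y)\, \xi = 0,
\end{equation*}
where $N_\phi(X,Y) = \phi^2 [X,Y] + [\phi X,\phi Y] - \phi[\phi X,Y] - \phi[X,\phi Y]$ is the Nijenhuis tensor. Rewriting each Lie bracket via $[A,B] = \nabla_A B - \nabla_B A - T_\nabla(A,B)$ and exploiting $\nabla\phi = 0$, the covariant-derivative pieces cancel and $N_\phi$ is expressed entirely in terms of $T_\nabla$. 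Substituting $T_\nabla(X,Y) = 2 d\theta(X,Y) T$ for $X, Y \in HM$ and $T_\nabla(T,X) = \tau(X)$, a short case analysis over $\{X, Y \in HM\}$, $\{X = T\}$, $\{Y = T\}$ shows that the $d\theta$-contributions combine with $2 d\eta\otimes\xi$ and cancel exactly, while the residual terms are linear combinations of $\tau$ and $J_b\tau$. Consequently, normality is equivalent to $\tau\equiv 0$.

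The main obstacle I anticipate is the sign and index bookkeeping in this last step: one must track the conventions $\xi = -T$, $\eta = -\theta$ carefully, and use the property $\tau(T_{1,0}M)\subset T_{0,1}M$ (equivalently, $J_b\tau + \tau J_b = 0$ on $HM$) to collapse the residual torsion terms into a single tensor that vanishes iff $\tau$ does. The complete computation is carried out in \cite{dt}, and I would refer there for the detailed verification rather than grind through the indices by hand.
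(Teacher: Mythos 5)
Your verification is correct and is essentially the standard argument from Dragomir--Tomassini \cite{dt}, which is exactly the source the paper cites for this proposition without supplying a proof of its own. In particular, your reduction of the Nijenhuis condition to the torsion of the Tanaka--Webster connection does work out as claimed: the $HM\times HM$ part of $N_{\phi}+2\,d\eta\otimes\xi$ vanishes identically by the $J_b$-invariance of $d\theta$, while $N_{\phi}(T,Y)+2\,d\eta(T,Y)\xi=2\tau(Y)$ for $Y\in HM$ (using $\tau\circ J_b=-J_b\circ\tau$), so normality holds precisely when $\tau\equiv 0$.
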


\begin{exm}[Heisenberg group]
The Heisenberg group $\mathbb{H}^n$ is obtained by $\mathbb{C}^n \times \mathbb{R}$ with the group law
$$
(z,t) \cdot (w,s) = (z+w, t+s+ 2 Im \langle z, w \rangle) .
$$
Let us consider the complex vector fields on $\mathbb{H}^n$,
$$
T_\alpha= \frac{\partial}{\partial z^\alpha} + \ii \overline{z^\alpha} \frac{\partial}{\partial t}
$$
where $\frac{\partial }{\partial z^\alpha} = \frac{1}{2} (\frac{\partial}{\partial x^\alpha} - \ii \frac{\partial}{\partial y^\alpha})$ and $z^\alpha= x^\alpha + \ii y^\alpha$.
The CR structure $T_{1,0} \mathbb{H}^n$ is spanned by $\{ T_1, \dots, T_n\}$.
There is a pseudo-Hermitian structure $\theta$ on $\mathbb{H}^n$ defined by
$$
\theta = d t+ 2 \sum_{\alpha =1}^n (x^\alpha d y^\alpha- y^\alpha d x^\alpha ).
$$
The Levi form $L_\theta= 2 \sum_{\alpha =1}^n d z^\alpha \wedge d z^{\ba} $ is positive definite, so $(\mathbb{H}^n, H \mathbb{H}^n, J_b, \theta)$ is a strictly pseudo-Hermitian  CR manifold. The characteristic direction is $T = \frac{\partial}{\partial t}$. Moreover, the Tanaka-Webster connection of $(\mathbb{H}^n, H \mathbb{H}^n, J_b, \theta)$ is flat. Hence the pseudo-Hermitian torsion is zero, and $(\mathbb{H}^n, H \mathbb{H}^n, J_b, \theta)$ is Sasakian (See \cite{dt} for details).
\end{exm}

Let $(M, HM, J_b, \theta)$ be a strictly pseudoconvex CR (2n+1)-manifold. Let $\{ Z_1, \dots, Z_n \}$ be a local orthonormal frame of $T_{1,0} M$ defined on the open set $U \subset M$ , and $\{ \theta^1, \dots \theta^n \}$ its dual coframe.
Then,
\begin{align*}
d \theta = 2 \ii \sum_{\alpha=1}^{n} \theta^\alpha \wedge \theta^{\ba} .
\end{align*}
Since $\tau(T_{1,0} M) \subset T_{0,1} M$, one can set $\tau Z_\alpha = A_{\alpha}^{\ \bb} Z_{\bb}$ for some local smooth functions $A_{\alpha}^{\ \bb} : U \rightarrow \mathbb{C}$. Denote by $\{ \omega_{\alpha}^{\ \beta} \}$ the Tanaka-Webster connection 1-forms with respect to the frame $\{ T_\alpha \}$, i.e. $\nabla Z_\alpha = \omega_{\alpha}^{\ \beta} \otimes Z_\beta$. Then the structure equations can be expressed as follows:
\begin{align} \label{se1}
d \theta^\beta  =  \theta^\alpha \wedge \omega_{\alpha}^{\ \beta} + \theta \wedge \tau^\beta, \quad
\tau_\alpha \wedge \theta^\alpha=0  , \quad
\omega_{\alpha}^{\ \beta} +  \omega_{\bb}^{\ \bar{\alpha}}=0
\end{align}
where $\tau^\alpha = A^\alpha_{\ \bb} \theta^{\bb} = A_{\ba \bb} \theta^{\bb}$ is a local 1-form.


In \cite{dt, w}, the authors showed that the curvature form of Tanaka-Webster connection $\Pi_{\beta}^{\ \alpha}  = d \omega_{\beta}^{\ \alpha} - \omega_{\beta}^{\ \gamma} \wedge \omega_{\gamma}^{\ \alpha}$ is given by
\begin{equation} \label{s3}
\Pi_{\beta}^{\ \alpha} = R_{\beta \  \mu \bar{\gamma}}^{\ \alpha} \theta^{\mu} \wedge \theta^{\bar{\gamma}} + W_{\beta \  \mu}^{\ \alpha} \theta^{\mu} \wedge \theta - W_{\  \beta \bar{\mu}}^{\alpha} \theta^{\bar{\mu}} \wedge \theta + 2 \ii \theta_{\beta} \wedge \tau^{\alpha} - 2 \ii \tau_{\beta} \wedge \theta^{\alpha}
\end{equation}
where $W_{\beta \ \mu}^{\ \alpha} = A_{\beta \mu, }^{\quad \  \alpha}$ and $W_{\ \beta \bar{\mu}}^\alpha = A_{\ \bar{\mu} , \beta}^\alpha$. In particular, $R_{\beta \ba \mu \bar{\gamma}}=R_{\mu \ba \beta \bar{\gamma}}$.
The pseudo-Hermitian $Ric$ tensor and the $Tor$ tensor on $T_{1,0} M$ are defined by
\begin{equation} \label{pric}
Ric(X,Y) = R_{\alpha \bb} X_{\ba} Y_{\beta} =R_{\alpha \bb \gamma \bar{\gamma}} X_{\ba} Y_{\beta}
\end{equation}
and
\begin{equation}
Tor(X,Y) = \ii (X_\alpha Y_\beta A_{\ba \bb}- X_{\ba} Y_{\bb} A_{\alpha \beta})
\end{equation}
for $X=X_{\ba} Z_{\alpha} \in M, \ Y=Y_{\bb} Z_{\beta} \in M$. 


Assume that $(N,h)$ is a Riemannian manifold. Let $\{ \xi_i \}$ be a local orthonormal frame of $TN$, and $\{ \sigma^i \}$ its dual coframe. Denote by $\{ \eta^{\ i}_j \}$ the connection 1-forms of the Levi-Civita connection $\hat{\nabla}$ on $N$, i.e. $\hat{\nabla} \xi_i = \eta_i^{\ j} \otimes \xi_j$. Then we have the structure equations
\begin{align} \label{se2}
d \sigma^i  =  \sigma^j \wedge \eta^{\ i}_j, \quad
d \eta^{\ i}_j  =  \eta^{\ l}_j \wedge \eta^{\ i}_l + \Omega^{\ i}_j, \quad
\Omega^{\ i}_j  =  \frac{1}{2} \hat{R}_{j \ kl}^{\ i} \sigma^k \wedge \sigma^l ,
\end{align}
where $\hat{R}$ is the curvature of Levi-Civita connection $\hat{\nabla}$ in $(N,h)$.

Suppose that $(M, HM, J_b, \theta)$ is a strictly pseudoconvex CR (2n+1)-manifold and $\nabla$ is its Tanaka-Webster connection. Let $f:M \rightarrow N$ be a smooth map and $f^* TN$ the pullback bundle.
Denote
\begin{align} \label{e1}
\begin{gathered}
d_b f =\pi_H df= f^i_\alpha \theta^\alpha \otimes \xi_i + f^i_{\bar{\alpha}} \theta^{\bar{\alpha}} \otimes \xi_i \ \in \Gamma (T^*M \otimes f^*TN) , \\
f_0 = df(T) = f_0^i \: \xi_i \ \in \Gamma (f^*TN) .
\end{gathered}
\end{align}
Let $\nabla^f$ be the pullback connection in $f^* TN$ induced by the Levi-Civita connection of $(N, h)$.
Then we can determine a connection $\nabla^f$ in $T^* M \otimes f^* TN$ by
$$ \nabla_X^f (\omega \otimes \xi) = \nabla_X \omega \otimes \xi + \omega \otimes \nabla_X^f \: \xi$$
for any $X \in \Gamma(TM)$, $\omega \in \Gamma(T^*M)$ and $\xi \in \Gamma(f^*TN)$.
Under the local frame $\{ \theta, \theta^\alpha, \theta^{\ba} \}$ and $\{ \xi_i \}$, the tensor $\nabla^f df$  can be expressed  by:
\begin{align}
\nabla^f d f = & \  f^i_{\alpha \beta} \theta^\alpha \otimes \theta^\beta \otimes \xi_i +
f^i_{\ba \beta} \theta^{\ba} \otimes \theta^{\beta} \otimes \xi_i +
f^i_{\alpha \bb} \theta^{\alpha} \otimes \theta^{\bb} \otimes \xi_i \nonumber \\
& \  +f^i_{\ba \bb} \theta^{\ba} \otimes \theta^{\bb} \otimes \xi_i + f^i_{0 \alpha} \theta \otimes \theta^\alpha \otimes \xi_i + f^i_{\alpha 0} \theta^\alpha \otimes \theta \otimes \xi_i  \nonumber \\
& \ + f^i_{0 \ba } \theta \otimes \theta^{\ba} \otimes \xi_i + f^i_{\ba 0} \theta^{\ba} \otimes \theta \otimes \xi_i +f^i_{00} \theta \otimes \theta \otimes \xi_i . \label{se3}
\end{align}
Denote by $\nabla^f_b d_b f$ the restriction of $\nabla^f df$ to $HM \times HM$.
Throughout the paper, the  Einstein summation convention is used (except in the inequality \eqref{bfp1}) and the ranges of indices are
$$
\alpha, \beta, \gamma, \mu ,\dots  \in \{ 1, \dots, n \}, \quad i, j, k, l , \dots \in \{ 1, \dots, \mbox{dim N} \}
$$
where dim $M = 2n+1$.

\begin{dfn}[\cite{dt}]
Let us consider the $f-$tensor field on $M$ given by
$$\tau(f;\theta,\hat{\nabla})=trace_{G_\theta} ( \nabla^f_b d_b f )\in \Gamma (f^* TN) .$$
We say that $f$ is pseudoharmonic, if $\tau(f;\theta,\hat{\nabla})=0$.
\end{dfn}
It is known that pseudoharmonic maps are the critical points of the following energy functional (cf. \cite{dt}):
$$E_\Omega (f)= \frac{1}{2} \int_\Omega trace_{G_\theta} (\pi_H f^*h) \ \theta \wedge (d\theta)^n$$
for any compact domain $\Omega \subset \subset M$.
With respect to the local frame $\{ \theta, \theta^\alpha, \theta^{\ba} \}$ and $\{ \xi_i \}$, we have
\begin{align} \label{ph}
\tau(f;\theta,\hat{\nabla})   = (f^i_{\alpha \ba} + f^i_{\ba \alpha}) \xi_i .
\end{align}

\section{Bochner-Type formulas}
In \cite{g}, A. Greenleaf obtained the commutation relations of smooth functions and established Bochner-type formulas of pseudoharmonic functions. In \cite{lee}, John M. Lee derived the commutation relations of $(1,0)$-forms. We shall need the commutation relations of various covariant derivatives of smooth maps and Bochner-type formulas of pseudoharmonic maps.

\begin{lem} \label{cc}
Let $f: M\rightarrow N$ be a smooth map. The covariant derivatives of $df$ satisfy the following commutation relations:
\begin{align}
f^i_{\alpha \beta}  = & f^i_{\beta \alpha}, \label{s1} \\
f^i_{\alpha \bb} -f^i_{\bb \alpha}  = & 2 \ii f^i_0 \delta_{\alpha \bb}, \label{s6} \\
f^i_{0 \alpha} -f^i_{\alpha 0} = & f^i_{\bb} A^{\bb}_{\ \alpha} , \label{s2}
\end{align}
and
\begin{align}
f^i_{\alpha \beta \gamma} - f^i_{\alpha \gamma \beta} = &2 \ii f^i_{\beta} A_{\alpha \gamma} -2 \ii f^i_\gamma A_{\alpha \beta} -f^j_\alpha f^k_\beta f^l_\gamma \hat{R}_{j \ kl}^{\ i}, \label{s4} \\
f^i_{\alpha \bb \bg}- f^i_{\alpha \bg \bb} = & 2 \ii f^i_\mu A^{\mu}_{\ \bg} \delta_{\alpha \bb}- 2 \ii f^i_\mu A^{\mu}_{\ \bb}  \delta_{\alpha \bg}- f^j_{\alpha} f^k_{\bb} f^l_{\bg} \hat{R}_{j \ kl}^{\ i}, \\
f^i_{\alpha \beta \bg} - f^i_{\alpha \bg \beta} =& f^i_\mu R_{\alpha \ \beta \bg}^{\ \mu} + 2 \ii f^i_{\alpha 0} \delta_{\beta \bg} - f^j_{\alpha} f^k_{\beta} f^l_{\bg} \hat{R}_{j \ kl}^{\ i}, \\
f^i_{\alpha \beta 0} - f^i_{\alpha 0 \beta}  = & f^i_\gamma A_{\alpha \beta, }^{\quad \ \gamma}-f^i_{\alpha \bg} A^{\bg}_{\ \beta}- f^j_{\alpha} f^k_{\beta} f^l_0 \hat{R}_{j \ kl}^{\ i}, \\
f^i_{\alpha \bb 0} -f^i_{\alpha 0 \bb}  =& -f^i_\gamma A^{\gamma}_{\ \bb,\alpha}- f^i_{\alpha \gamma} A^{\gamma}_{\ \bb} -f^j_{\alpha} f^k_{\bb} f^l_0 \hat{R}_{j \ kl}^{\ i}. \label{s5}
\end{align}
\end{lem}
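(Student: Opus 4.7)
The strategy is the classical one: apply $d$ to the defining identities for successive covariant derivatives of $df$ and use $d^{2}=0$, together with the structure equations \eqref{se1}, \eqref{se2} and the curvature identity \eqref{s3}. For the first-order identities \eqref{s1}, \eqref{s6}, \eqref{s2}, view $(df)^{i}:=f^{*}\sigma^{i}=f^{i}_{A}\theta^{A}$ as a 1-form on $M$ (with $A\in\{\alpha,\bar\alpha,0\}$). Unpacking $\nabla^{f}df=f^{i}_{AB}\theta^{B}\otimes\theta^{A}\otimes\xi_{i}$ in terms of the connection 1-forms yields
\begin{equation*}
df^{i}_{A}=f^{i}_{AB}\theta^{B}+f^{i}_{C}\omega_{A}^{C}-f^{j}_{A}f^{*}\eta_{j}^{i},
\end{equation*}
while the target structure equation \eqref{se2} gives $d(df)^{i}=f^{j}_{A}\theta^{A}\wedge f^{*}\eta_{j}^{i}$. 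Substituting the former into the latter and using \eqref{se1} together with $d\theta=2\ii\theta^{\gamma}\wedge\theta^{\bar\gamma}$, every connection and pullback term cancels and the equation reduces to
\begin{equation*}
f^{i}_{AB}\theta^{B}\wedge\theta^{A}+f^{i}_{\alpha}\theta\wedge\tau^{\alpha}+f^{i}_{\bar\alpha}\theta\wedge\tau^{\bar\alpha}+2\ii f^{i}_{0}\theta^{\gamma}\wedge\theta^{\bar\gamma}=0.
\end{equation*}
Reading off the coefficients of $\theta^{\beta}\wedge\theta^{\alpha}$, $\theta^{\alpha}\wedge\theta^{\bar\beta}$ and $\theta\wedge\theta^{\alpha}$ produces \eqref{s1}, \eqref{s6}, \eqref{s2}.

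For the second-order identities, apply $d$ once more to the first-derivative identity above. Now $d\omega_{A}^{C}=\omega_{A}^{D}\wedge\omega_{D}^{C}+\Pi_{A}^{C}$ brings in the pseudo-Hermitian curvature 2-form, and $d(f^{*}\eta_{j}^{i})=f^{*}\eta_{j}^{l}\wedge f^{*}\eta_{l}^{i}+f^{*}\Omega_{j}^{i}$ brings in the pullback of the target curvature. Eliminating $df^{i}_{AB}$ via the analogous identity $df^{i}_{AB}=f^{i}_{ABC}\theta^{C}+f^{i}_{DB}\omega_{A}^{D}+f^{i}_{AD}\omega_{B}^{D}-f^{j}_{AB}f^{*}\eta_{j}^{i}$, every pure-connection and pure-$\eta$ term cancels, leaving
\begin{equation*}
f^{i}_{ABC}\theta^{C}\wedge\theta^{B}+f^{i}_{A\beta}\theta\wedge\tau^{\beta}+f^{i}_{A\bar\beta}\theta\wedge\tau^{\bar\beta}+2\ii f^{i}_{A0}\theta^{\gamma}\wedge\theta^{\bar\gamma}=-f^{i}_{C}\Pi_{A}^{C}+f^{j}_{A}f^{*}\Omega_{j}^{i}.
\end{equation*}
Setting $A=\alpha$, expanding $\Pi_{\alpha}^{\mu}$ via \eqref{s3} and $\Omega_{j}^{i}$ via \eqref{se2}, and extracting the coefficients of $\theta^{\beta}\wedge\theta^{\gamma}$, $\theta^{\bar\beta}\wedge\theta^{\bar\gamma}$, $\theta^{\beta}\wedge\theta^{\bar\gamma}$, $\theta\wedge\theta^{\beta}$ and $\theta\wedge\theta^{\bar\beta}$, one obtains the five remaining identities \eqref{s4}--\eqref{s5}. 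The pieces $2\ii\theta_{\alpha}\wedge\tau^{\mu}-2\ii\tau_{\alpha}\wedge\theta^{\mu}$ of $\Pi_{\alpha}^{\mu}$ supply the pure-torsion contributions such as $2\ii f^{i}_{\beta}A_{\alpha\gamma}-2\ii f^{i}_{\gamma}A_{\alpha\beta}$; the $R_{\alpha\ \nu\bar\rho}^{\ \mu}\theta^{\nu}\wedge\theta^{\bar\rho}$ piece gives the Webster-curvature contribution $f^{i}_{\mu}R_{\alpha\ \beta\bar\gamma}^{\ \mu}$; the $W$-pieces together with the $\tau^{\beta}$ and $\tau^{\bar\beta}$ terms on the left supply the covariant derivatives of $A$ appearing in the last two identities; and $f^{j}_{\alpha}f^{*}\Omega_{j}^{i}$ supplies the target-curvature term $-f^{j}_{\alpha}f^{k}_{\beta}f^{l}_{\gamma}\hat R_{j\ kl}^{\ i}$ (after invoking the antisymmetry $\hat R_{j\ kl}^{\ i}=-\hat R_{j\ lk}^{\ i}$).

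The argument is entirely mechanical and the main obstacle is bookkeeping: one must keep consistent conventions for the torsion quantities $\tau^{\alpha},\tau_{\alpha},A^{\bar\beta}_{\ \alpha},A_{\alpha\beta}$ and their Tanaka-Webster covariant derivatives; antisymmetrize carefully when two indices of the same type appear in the same wedge-basis element; and invoke the antisymmetry of $\hat R$ in its last two indices at the appropriate places to match the stated form. No ingredient beyond those introduced in Section \ref{crb} is required.
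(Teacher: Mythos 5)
Your proposal is correct and follows essentially the same route as the paper: pull back the target coframe, take exterior derivatives, use the structure equations \eqref{se1}, \eqref{se2} and the curvature form \eqref{s3}, substitute the defining expansions of the second- and third-order covariant derivatives, and compare coefficients of the wedge basis (with the same care about antisymmetrization and the skew-symmetry of $\hat R$ in its last two indices). The only cosmetic difference is your uniform index $A\in\{\alpha,\bar\alpha,0\}$ bookkeeping, which the paper carries out type by type.
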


\begin{proof}
The identities \eqref{e1} imply
\begin{align} \label{e2}
f^* \sigma^i = f^i_\alpha \theta^\alpha + f^i_{\ba} \theta^{\ba} + f^i_0 \theta .
\end{align}
We take the exterior derivative of \eqref{e2} and use the structure equations \eqref{se1}, \eqref{se2} to get
\begin{align}
0=& (d f^i_\alpha -f^i_\beta \omega^{\  \beta}_\alpha + f^j_\alpha \tilde{\eta}^{\ i}_j ) \wedge \theta^\alpha
+ (d f^i_{\ba} -f^i_{\bb} \omega^{\ \bb}_{\ba} + f^j_{\ba} \tilde{\eta}^{\ i}_j ) \wedge \theta^{\ba} \nonumber \\
& + (d f^i_0 + f^j_0 \tilde{\eta}^{\ i}_j) \wedge \theta + f^i_\alpha \theta \wedge \tau^\alpha + f^i_{\ba} \theta \wedge \tau^{\ba} +2  \ii f^i_0 \theta^\beta \wedge \theta^{\bb} , \label{se4}
\end{align}
where $\tilde{\eta}^{\ i}_j = f^* \eta^{\ i}_j$. 
On the other hand, the second-order covariant derivatives satisfy
\begin{align}
d f^i_\alpha -f^i_\beta \omega^{\  \beta}_\alpha + f^j_\alpha \tilde{\eta}^{\ i}_j &= f^i_{\alpha \beta} \theta^\beta + f^i_{\alpha \bb} \theta^{\bb} +f^i_{\alpha 0} \theta , \label{se5} \\
d f^i_{\ba} -f^i_{\bb} \omega^{\ \bb}_{\ba} + f^j_{\ba} \tilde{\eta}^{\ i}_j &= f^i_{\ba \beta} \theta^\beta + f^i_{\ba \bb} \theta^{\bb} +f^i_{\ba 0} \theta , \\
d f^i_0 + f^j_0 \tilde{\eta}^{\ i}_j &= f^i_{0 \beta} \theta^\beta + f^i_{0 \bb} \theta^{\bb} +f^i_{0 0} \theta .
\end{align}
Substituting the above three equations into \eqref{se4} and using $\tau^\alpha = A^\alpha_{\ \bb} \theta^{\bb}$, we obtain
\begin{align*}
0= & f^i_{\alpha \beta} \theta^{\beta} \wedge \theta^{\alpha} +f^i_{\ba \bb} \theta^{\bb} \wedge \theta^{\ba} + (f^i_{\alpha \bb} -f^i_{\bb \alpha} - 2\ii f^i_0 \delta_{\alpha \bb} ) \theta^{\bb} \wedge \theta^\alpha \\
& + (f^i_{\alpha 0} - f^i_{0 \alpha} + f^i_{\bb} A^{\bb}_{\ \alpha}) \theta \wedge \theta^\alpha + (f^i_{\ba 0} - f^i_{0 \ba} + f^i_{\beta} A^{\beta}_{\ \ba}) \theta \wedge \theta^{\ba} .
\end{align*}
which (by comparing types) yields \eqref{s1}-\eqref{s2}. To prove the next five equations, we differentiate \eqref{se5} and use the structure equations again. Then we obtain
\begin{align}
0= & (d f^i_{\alpha \beta} - f^i_{\alpha \gamma} \omega^{\ \gamma}_{\beta} - f^i_{\gamma \beta} \omega_{\alpha}^{\ \gamma} + f^j_{\alpha \beta}  \tilde{\eta}^{\ i}_j) \wedge \theta^\beta \nonumber \\
&+(d f^i_{\alpha \bb} - f^i_{\alpha \bg} \omega^{\ \bg}_{\bb} - f^i_{\gamma \bb} \omega_{\alpha}^{\ \gamma} + f^j_{\alpha \bb} \tilde{\eta}^{\ i}_j) \wedge \theta^{\bb} \nonumber \\
&+ (d f^i_{\alpha 0} - f^i_{\beta 0} \omega_{\alpha}^{\ \beta} + f^j_{\alpha 0}  \tilde{\eta}^{\ i}_j) \wedge \theta + f^i_\beta \Pi^{\ \beta}_{ \alpha}- f^j_\alpha f^*(\Omega^i_j) \nonumber \\
& + f^i_{\alpha \beta} A^{\beta}_{\ \bg} \theta \wedge \theta^{\bg} + f^i_{\alpha \bb} A^{\bb}_{\ \gamma} \theta \wedge \theta^\gamma+ 2 \ii f^i_{\alpha 0} \theta^\beta \wedge \theta_\beta . \label{se6}
\end{align}
Since the third-order covariant derivatives of $f$ is given by
\begin{align*}
d f^i_{\alpha \beta} - f^i_{\alpha \gamma} \omega^{\ \gamma}_{\beta} - f^i_{\gamma \beta} \omega_{\alpha}^{\ \gamma} + f^j_{\alpha \beta}  \tilde{\eta}^{\ i}_j =& f^i_{\alpha \beta \gamma} \theta^{\gamma} + f^i_{\alpha \beta \bg} \theta^{\bg} + f^i_{\alpha \beta 0} \theta , \\
d f^i_{\alpha \bb} - f^i_{\alpha \bg} \omega^{\ \bg}_{\bb} - f^i_{\gamma \bb} \omega_{\alpha}^{\ \gamma} + f^j_{\alpha \bb}  \tilde{\eta}^{\ i}_j =& f^i_{\alpha \bb \gamma} \theta^{\gamma} + f^i_{\alpha \bb \bg} \theta^{\bg} + f^i_{\alpha \bb 0} \theta , \\
d f^i_{\alpha 0} - f^i_{\beta 0} \omega_{\alpha}^{\ \beta} + f^j_{\alpha 0}  \tilde{\eta}^{\ i}_j = & f^i_{\alpha 0 \gamma} \theta^{\gamma} + f^i_{\alpha 0 \bg} \theta^{\bg} + f^i_{\alpha 0 0} \theta ,
\end{align*}
we can substitute them into \eqref{se6} and use \eqref{s3}, \eqref{se2} to obtain
\begin{align*}
0&= \sum_{\gamma < \beta} (f^i_{\alpha \beta \gamma} - f^i_{\alpha \gamma \beta} - 2\ii f^i_{\beta} A_{\alpha \gamma} + 2 \ii f^i_\gamma A_{\alpha \beta} + f^j_\alpha f^k_\beta f^l_\gamma \hat{R}_{j \ kl}^{\ i}) \theta^\gamma \wedge \theta^\beta \\
&+ \sum_{\gamma , \beta} (f^i_{\alpha \beta \bg} - f^i_{\alpha \bg \beta} - f^i_\mu R_{\alpha \ \beta \bg}^{\ \mu} - 2 \ii f^i_{\alpha 0} \delta_{\beta \bg} + f^j_{\alpha} f^k_{\beta} f^l_{\bg} \hat{R}_{j \ kl}^{\ i}) \theta^{\bg} \wedge \theta^{\beta} \\
&+ \sum_{\gamma < \beta} (f^i_{\alpha \bb \bg}- f^i_{\alpha \bg \bb} -2 \ii f^i_\mu A^{\mu}_{\ \bg} \delta_{\alpha \bb}+ 2\ii f^i_\mu A^{\mu}_{\ \bb}  \delta_{\alpha \bg}+ f^j_{\alpha} f^k_{\bb} f^l_{\bg} \hat{R}_{j \ kl}^{\ i}) \theta^{\bg} \wedge \theta^{\bb} \\
&+ \sum_{\beta} (f^i_{\alpha \beta 0} - f^i_{\alpha 0 \beta}  - f^i_\gamma A_{\alpha \beta, }^{\quad \ \gamma}+f^i_{\alpha \bg} A^{\bg}_{\ \beta}+ f^j_{\alpha} f^k_{\beta} f^l_0 \hat{R}_{j \ kl}^{\ i}) \theta \wedge \theta^\beta \\
&+ \sum_{\beta} (f^i_{\alpha \bb 0} -f^i_{\alpha 0 \bb}  +f^i_\gamma A^{\gamma}_{\ \bb,\alpha}+ f^i_{\alpha \gamma} A^{\gamma}_{\ \bb} +f^j_{\alpha} f^k_{\bb} f^l_0 \hat{R}_{j \ kl}^{\ i}) \theta \wedge \theta^{\bb} .
\end{align*}
which (by comparing types) yields \eqref{s4}-\eqref{s5}.
\end{proof}

Before introducing the Bochner-type formulas, we recall a property of the sub-Laplace operator $\triangle_b$ (cf. \cite{dt}). If $u$ is a $C^2$ function on $M$, then $\triangle_b u = trace_{G_\theta} (\nabla_b d_b u)$. With respect to the local orthonormal frame $\{ T, Z_\alpha, Z_{\ba} \}$, we have $\triangle_b u = u_{\alpha \ba}+ u_{\ba \alpha}$.

\begin{lem} \label{c1}
For any smooth map $f: M \rightarrow N$, we have
\begin{align}
\frac{1}{2} \triangle_b |d_b f|^2 =& \ |\nabla_b^f d_b f|^2 +\langle \nabla_b^f  \tau (f;\theta,\hat{\nabla}), d_b f\rangle - 4 \langle  d_b f \circ J_b, \nabla_b^f f_0\rangle \nonumber\\
& \ + (2Ric - 2 (n-2)Tor) (f^i_{\bb} Z_\beta, f^i_{\ba} Z_\alpha)  \nonumber\\
& \ +2 (f^i_{\bar{\alpha}} f^j_{\beta} f^k_{\bar{\beta}} f^l_{\alpha} \hat{R}^{\ i}_{j\ kl}
+ f^i_{\alpha} f^j_{\beta} f^k_{\bar{\beta}} f^l_{\bar{\alpha}} \hat{R}^{\ i}_{j\ kl}) , \label{bf1}\\
\frac{1}{2} \triangle_b |d f(T)|^2  =&  \ | \nabla_b^f f_0 |^2 + \langle df(T), \nabla_T^f \; \tau(f;\theta,\hat{\nabla})\rangle  + 2 f^i_0 f^j_{\alpha} f^k_{\ba} f^l_0 \hat{R}_{j \ kl}^{\ i} \nonumber\\
 &\  +2( f^i_0 f^i_\beta A_{\bb \ba , \alpha} + f^i_0 f^i_{\bb} A_{\beta \alpha , \ba} + f^i_0 f^i_{\bb \ba} A_{\beta \alpha} +f^i_0 f^i_{\beta \alpha} A_{\bb \ba} ) \label{bf2}
\end{align}
where $\nabla_b^f  \tau (f;\theta,\hat{\nabla})$ and $\nabla_b^f f_0$ are the restriction of $\nabla^f  \tau (f;\theta,\hat{\nabla})$ and $\nabla^f f_0$ to $HM$.
\end{lem}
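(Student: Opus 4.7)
The plan is to compute each sub-Laplacian directly in a local unitary frame $\{Z_\alpha,Z_{\ba}\}$ by the Leibniz rule, reduce the resulting third-order covariant derivatives of $f$ using the commutation relations of Lemma \ref{cc}, and then collect the terms so that $\nabla^f_b\tau(f;\theta,\hat{\nabla})$ (respectively $\nabla^f_T\tau(f;\theta,\hat{\nabla})$) appears explicitly, with the remainder grouped into pseudo-Hermitian curvature, Webster torsion, Reeb coupling and target curvature contributions.

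For \eqref{bf1}, I start from $|d_bf|^2 = f^i_\alpha f^i_{\ba}+f^i_{\ba}f^i_\alpha$ and apply $\triangle_b=\nabla_\alpha\nabla_{\ba}+\nabla_{\ba}\nabla_\alpha$; two uses of the product rule yield
\begin{align*}
\tfrac{1}{2}\triangle_b|d_bf|^2 = |\nabla^f_b d_b f|^2 + f^i_\alpha\bigl(f^i_{\ba\beta\bb}+f^i_{\ba\bb\beta}\bigr) + f^i_{\ba}\bigl(f^i_{\alpha\beta\bb}+f^i_{\alpha\bb\beta}\bigr).
\end{align*}
I then commute the last two indices in each third-order term via \eqref{s4}--\eqref{s6} (taking complex conjugates of the listed identities where needed, since $f^i$ is real-valued). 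Commuting a mixed pair produces the trace $f^i_{\beta\bb}+f^i_{\bb\beta}$, i.e.\ the $\xi_i$-component of $\tau(f;\theta,\hat{\nabla})$, and pulling the remaining derivative through gives $\langle\nabla^f_b\tau(f;\theta,\hat{\nabla}),d_bf\rangle$. The pseudo-Hermitian curvature corrections, together with the symmetry $R_{\beta\ba\mu\bar\mu}=R_{\mu\ba\beta\bar\mu}$ recorded after \eqref{s3}, contract to $2\,Ric(f^i_{\bb}Z_\beta,f^i_{\ba}Z_\alpha)$. The Webster-torsion corrections coming from the same commutations combine, using $A_{\alpha\beta}=A_{\beta\alpha}$, into the $-2(n-2)Tor$ piece. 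The Reeb terms $2\ii f^i_0\delta_{\alpha\bb}$ arising from \eqref{s6} are pulled out and recognised, through the action of $J_b$ on $T_{1,0}M\oplus T_{0,1}M$, as $-4\langle d_bf\circ J_b,\nabla^f_b f_0\rangle$. Finally the target-curvature contractions $f^j f^k f^l\hat{R}$ supply the last line of \eqref{bf1}.

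For \eqref{bf2}, from $|df(T)|^2=f^i_0 f^i_0$ one gets
\begin{align*}
\tfrac{1}{2}\triangle_b|df(T)|^2 = |\nabla^f_b f_0|^2 + f^i_0\bigl(f^i_{0\alpha\ba}+f^i_{0\ba\alpha}\bigr).
\end{align*}
Using \eqref{s2} I swap the leading $0$ with the adjacent horizontal index, picking up an $f^i_{\bb}A^{\bb}{}_\alpha$-type correction; differentiating once more and applying \eqref{s5} to move the remaining $0$ past the final horizontal index introduces the torsion-derivative terms $f^i_\gamma A^\gamma{}_{\ba,\alpha}$ and $f^i_{\alpha\gamma}A^\gamma{}_{\ba}$ together with the target-curvature term $f^j_\alpha f^k_{\ba}f^l_0\hat{R}$. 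Summing in $\alpha$ exhibits $f^i_{\alpha\ba}+f^i_{\ba\alpha}$ as the tension field, producing $\langle df(T),\nabla^f_T\tau(f;\theta,\hat{\nabla})\rangle$, and all remaining torsion pieces assemble, after using $A_{\alpha\beta}=A_{\beta\alpha}$, into the four explicit torsion terms of \eqref{bf2}. The main obstacle is purely bookkeeping in \eqref{bf1}: the many Webster-torsion corrections generated by the commutation steps must cancel down precisely to $-2(n-2)Tor$, and the $2\ii$-factors from \eqref{s6} must be tracked carefully so that the Reeb coupling appears with the correct sign, coefficient $-4$, and $J_b$-twist.
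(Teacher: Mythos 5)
Your proposal is correct and follows essentially the same route as the paper: expand $\tfrac12\triangle_b|d_bf|^2$ by the Leibniz rule to isolate $|\nabla^f_bd_bf|^2$ plus the four third-order terms $f^i_{\ba}f^i_{\alpha\beta\bb}+f^i_{\alpha}f^i_{\ba\beta\bb}+f^i_{\alpha}f^i_{\ba\bb\beta}+f^i_{\ba}f^i_{\alpha\bb\beta}$, then reduce these with the commutation relations of Lemma \ref{cc} so that the tension field, the $Ric$ and $Tor$ contractions, the Reeb coupling $-4\langle d_bf\circ J_b,\nabla^f_bf_0\rangle$ via $\langle d_bf\circ J_b,\nabla^f_bf_0\rangle=\ii(f^i_{\alpha}f^i_{0\ba}-f^i_{\ba}f^i_{0\alpha})$, and the target-curvature terms emerge. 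Your treatment of \eqref{bf2} via \eqref{s2} and \eqref{s5} is likewise the intended argument (the paper only remarks that it is ``similar''), so the only remaining work is the bookkeeping you already flag.
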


\begin{proof}
Since $|d_b f|^2  = 2 f^i_\alpha f^i_{\ba}$, we have
\begin{align*}
\frac{1}{2} \triangle_b |d_b f|^2 =& (f^i_\alpha f^i_{\ba})_{\beta \bb}+ (f^i_\alpha f^i_{\ba})_{\bb \beta} \\
= & 2(f^i_{\alpha \beta} f^i_{\ba \bb} + f^i_{\alpha \bb} f^i_{\ba \beta}) + f^i_{\ba} f^i_{\alpha \beta \bb} + f^i_{\alpha} f^i_{\ba \beta \bb} + f^i_{\alpha} f^i_{\ba \bb \beta} + f^i_{\ba} f^i_{\alpha \bb \beta} \\
= & |\nabla^f_b d_b f|^2 + f^i_{\ba} f^i_{\alpha \beta \bb} + f^i_{\alpha} f^i_{\ba \beta \bb} + f^i_{\alpha} f^i_{\ba \bb \beta} + f^i_{\ba} f^i_{\alpha \bb \beta} .
\end{align*}
Lemma \ref{cc} implies
\begin{align*}
f^i_{\alpha \beta \bb} = & f^i_{\beta \alpha \bb} = f^i_{\beta \bb \alpha  } +f^i_\mu R_{\beta \ \alpha \bb}^{\ \mu} + 2 \ii f^i_{\beta 0} \delta_{\alpha \bb} - f^j_{\beta} f^k_{\alpha} f^l_{\bb} \hat{R}_{j \ kl}^{\ i} \\
=& f^i_{\beta \bb \alpha  } +f^i_\mu R_{\beta \ \alpha \bb}^{\ \mu} + 2 \ii (f^i_{0 \beta}- f^i_{\bar{\mu}} A^{\bar{\mu}}_{\ \beta}) \delta_{\alpha \bb} - f^j_{\beta} f^k_{\alpha} f^l_{\bb} \hat{R}_{j \ kl}^{\ i}, \\
f^i_{\ba \beta \bb} = & (f^i_{\beta \ba} - 2 \ii f^i_0 \delta_{\beta \ba})_{\bb} = f^i_{\beta \ba \bb} - 2 \ii f^i_{0 \bb} \delta_{\beta \ba} \\
= & f^i_{\beta \bb \ba}+ 2 \ii f^i_\mu A^{\mu}_{\ \bb} \delta_{\beta \ba}- 2 \ii f^i_\mu A^{\mu}_{\ \ba}  \delta_{\beta \bb}- f^j_{\beta} f^k_{\ba} f^l_{\bb} \hat{R}_{j \ kl}^{\ i}- 2 \ii f^i_{0 \bb} \delta_{\beta \ba} .
\end{align*}
Substituting them into the previous identity, we obtain
\begin{align*}
\frac{1}{2} \triangle_b |d_b f|^2 =& |\nabla^f_b d_b f|^2 + f^i_{\ba} (f^i_{\beta \bb} + f^i_{\bb \beta})_{\alpha} + f^i_{\alpha} (f^i_{\beta \bb} + f^i_{\bb \beta})_{\ba}  \\
&+ 2 f^i_{\ba} f^i_{\mu} R_{\alpha \bar{\mu} \beta \bb} - 2 \ii (n-2) (f^i_{\alpha} f^i_\mu A_{\bar{\mu} \ba}- f^i_{\ba} f^i_{\bar{\mu}} A_{\mu \alpha}) \\
& +2 (f^i_{\bar{\alpha}} f^j_{\beta} f^k_{\bar{\beta}} f^l_{\alpha} \hat{R}^{\ i}_{j\ kl}
+ f^i_{\alpha} f^j_{\beta} f^k_{\bar{\beta}} f^l_{\bar{\alpha}} \hat{R}^{\ i}_{j\ kl}) + 4 \ii (f^i_{\ba} f^i_{0 \alpha} - f^i_{\alpha} f^i_{0 \ba}) .
\end{align*}
By the identity $\langle  d_b f \circ J_b, \nabla_b^f f_0\rangle= \ii (f^i_{\alpha} f^i_{0 \ba} -f^i_{\ba} f^i_{0 \alpha}) $, we get \eqref{bf1}. The proof of \eqref{bf2} is similar.
\end{proof}

\begin{lem} \label{bte}
Let $(M, HM, J_b, \theta)$ be a $(2n+1)$-Sasakian manifold with
\begin{equation} \label{ric3}
Ric(X,X) \geq -k |X|^2
\end{equation}
for all $X \in T_{1,0} M$, and some $k \geq 0$. Suppose that $(N,h)$ is a Riemannian manifold with nonpositive sectional curvature. If $f: M \rightarrow N$ is a pseudoharmonic map, then for any $\nu>0$, we have
\begin{equation}
\triangle_b |d_b f|^2 \geq |\nabla_b^f d_b f|^2 + 2n |f_0|^2 -  \left( 2k+ \frac{32}{\nu } \right) |d_b f|^2 - \frac{1}{2} \nu  |\nabla_b^f f_0|^2 \label{nsm3}
\end{equation}
and
\begin{equation} \label{nsm4}
\triangle_b |f_0|^2 \geq 2 |\nabla_b^f f_0|^2 .
\end{equation}
\end{lem}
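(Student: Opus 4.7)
Both inequalities follow from the Bochner-type formulas \eqref{bf1}--\eqref{bf2} by systematically applying the three hypotheses: $M$ Sasakian (so the pseudo-Hermitian torsion and its derivatives vanish, forcing $Tor \equiv 0$ and killing every $A_{\alpha\beta}$ or $A_{\alpha\beta,\gamma}$ term), $f$ pseudoharmonic (so $\tau(f;\theta,\hat\nabla) = 0$ together with all of its covariant derivatives that appear), and $N$ of nonpositive sectional curvature (so the $\hat R$-contractions in \eqref{bf1} and \eqref{bf2} are nonnegative, as in the classical Eells-Sampson Bochner formula for harmonic maps). Inequality \eqref{nsm4} is the easier of the two: in \eqref{bf2} the $\tau$-term, all four $A$-terms, and the $\hat R$-term drop or are nonnegative, leaving $\tfrac{1}{2}\triangle_b|f_0|^2 \geq |\nabla_b^f f_0|^2$, whence \eqref{nsm4}.

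For \eqref{nsm3}, the same three hypotheses eliminate the $\tau$-term, the $Tor$-term, and make the $\hat R$-terms nonnegative, so from \eqref{bf1} we obtain
\begin{equation*}
\tfrac{1}{2}\triangle_b|d_b f|^2 \geq |\nabla_b^f d_b f|^2 - 4\langle d_b f \circ J_b, \nabla_b^f f_0\rangle + 2\, Ric\bigl(f^i_{\bb}Z_\beta,\, f^i_{\ba}Z_\alpha\bigr).
\end{equation*}
Since $|d_b f|^2 = 2 f^i_\alpha f^i_{\ba}$, the Ricci hypothesis gives $2\,Ric(\cdot,\cdot) \geq -\tfrac{k}{2}|d_b f|^2$ after summing over $i$. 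Because $J_b$ is an isometry of $(HM, G_\theta)$, $|d_b f \circ J_b| = |d_b f|$, and Young's inequality with parameter $\epsilon = \nu/8$ yields $|8\langle d_b f\circ J_b, \nabla_b^f f_0\rangle| \leq \tfrac{32}{\nu}|d_b f|^2 + \tfrac{\nu}{2}|\nabla_b^f f_0|^2$. Multiplying through by $2$ and combining these bounds produces every term on the right-hand side of \eqref{nsm3} except the $2n|f_0|^2$, which must be absorbed from the spare copy of $|\nabla_b^f d_b f|^2$ created by doubling.

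The main obstacle is therefore the pointwise inequality $|\nabla_b^f d_b f|^2 \geq 2n|f_0|^2$, which is the only step that genuinely couples pseudoharmonicity with the Sasakian structure. My plan is to sum the commutation relation \eqref{s6} over $\alpha = \beta$ to get $\sum_\alpha(f^i_{\alpha\ba} - f^i_{\ba\alpha}) = 2\ii n f^i_0$, then combine with the pseudoharmonic equation $\sum_\alpha(f^i_{\alpha\ba} + f^i_{\ba\alpha}) = 0$ to deduce $\sum_\alpha f^i_{\alpha\ba} = \ii n f^i_0$. Cauchy-Schwarz then gives $\sum_\alpha|f^i_{\alpha\ba}|^2 \geq \tfrac{1}{n}\bigl|\sum_\alpha f^i_{\alpha\ba}\bigr|^2 = n|f^i_0|^2$, so $\sum_{i,\alpha,\beta}|f^i_{\alpha\bb}|^2 \geq n|f_0|^2$ and hence $|\nabla_b^f d_b f|^2 \geq 2\sum_{i,\alpha,\beta}|f^i_{\alpha\bb}|^2 \geq 2n|f_0|^2$ after discarding the nonnegative $|f^i_{\alpha\beta}|^2$ contributions. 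Substituting this back completes the proof of \eqref{nsm3}.
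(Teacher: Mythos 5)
Your proposal is correct and follows the same skeleton as the paper's proof: specialize \eqref{bf1}--\eqref{bf2} using $\tau(f;\theta,\hat\nabla)=0$, $Tor=0$ and $A_{\alpha\beta}=0$ (Sasakian), bound the target-curvature contractions by zero via nonpositive sectional curvature, absorb the mixed term $-8\langle d_bf\circ J_b,\nabla_b^f f_0\rangle$ by Young's inequality with the stated constants, and reduce everything to the pointwise bound $|\nabla_b^f d_bf|^2\geq 2n|f_0|^2$. The one place you genuinely diverge is that last bound: you combine the trace of \eqref{s6} with the pseudoharmonic equation to get $\sum_\alpha f^i_{\alpha\ba}=\ii n f^i_0$ and then apply Cauchy--Schwarz, whereas the paper writes $2\sum_\alpha f^i_{\alpha\ba}f^i_{\ba\alpha}=\tfrac12\sum_\alpha\bigl[|f^i_{\alpha\ba}+f^i_{\ba\alpha}|^2+|f^i_{\alpha\ba}-f^i_{\ba\alpha}|^2\bigr]$ and simply discards the first square; the paper's version therefore yields $|\nabla_b^f d_bf|^2\geq 2n|f_0|^2$ for \emph{any} smooth map, without invoking pseudoharmonicity, while yours needs it --- both give the same constant $2n$, so either works here. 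One small arithmetic slip: since $\sum_i|f^i_{\bb}Z_\beta|^2=f^i_\beta f^i_{\bb}=\tfrac12|d_bf|^2$, the Ricci hypothesis gives $2\,Ric(f^i_{\bb}Z_\beta,f^i_{\ba}Z_\alpha)\geq -k|d_bf|^2$, not $-\tfrac{k}{2}|d_bf|^2$; after doubling this is exactly the $-2k|d_bf|^2$ in \eqref{nsm3}, so the conclusion is unaffected, but as written your intermediate claim is too strong by a factor of $2$.
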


\begin{proof}
Since $f$ is pseudoharmonic, by definition we have $\tau(f;\theta,\hat{\nabla}) =0$. Because $(M, HM, J_b, \theta)$ is Sasakian, the tensor $Tor=0$. Hence, by the assumption on the pseudo-Hermitian Ricci curvature, \eqref{bf1} becomes
\begin{align}
 \triangle_b |d_b f|^2 =& 2|\nabla_b^f d_b f|^2 - 8 \langle  d_b f \circ J_b, \nabla_b^f f_0\rangle - 2 k |d_b f|^2 \nonumber\\
& \ +4 (f^i_{\bar{\alpha}} f^j_{\beta} f^k_{\bar{\beta}} f^l_{\alpha} \hat{R}^{\ i}_{j\ kl}
+ f^i_{\alpha} f^j_{\beta} f^k_{\bar{\beta}} f^l_{\bar{\alpha}} \hat{R}^{\ i}_{j\ kl}). \label{e3}
\end{align}
Using the commutation relation \eqref{s6}, we can estimate
\begin{align}
|\nabla_b^f d_b f |^2= & 2 \: \sum_{\alpha, \beta =1}^n (f^i_{\ba \beta} f^i_{\alpha \bb} + f^i_{\alpha \beta} f^i_{\ba \bb}) \geq  2 \: \sum_{\alpha=1}^n f^i_{\alpha \ba} f^i_{\ba \alpha} \nonumber \\
= &  \frac{1}{2} \: \sum_{\alpha=1}^n [|f^i_{\alpha \ba}+ f^i_{\ba \alpha}|^2+ |f^i_{\alpha \ba}- f^i_{\ba \alpha}|^2]\
\geq \frac{1}{2} \: \sum_{\alpha=1}^n  |f^i_{\alpha \ba}- f^i_{\ba \alpha}|^2 \nonumber \\
= & 2n |f_0|^2 . \label{bfp1}
\end{align}
The second term of the right side of \eqref{e3} can be controlled by the Schwarz inequality
\begin{equation} \label{bfp2}
- 8 \langle d_b f  \circ J_b, \nabla_b^f f_0 \rangle \geq -   \frac{32}{\nu}  |d_b f|^2 - \frac{1}{2} \nu |\nabla_b^f f_0|^2 .
\end{equation}
To deal with the last term of \eqref{e3}, we set $e_\alpha = Re \; df(Z_\alpha)$ and $ e_{\alpha}' = Im  \; df(Z_\alpha) $. Then
\begin{align}
Last\ term \  of \  \eqref{e3} =& \ 4 \langle \hat{R}( df(Z_{\bb}), df(Z_{\ba}) ) df(Z_\beta), df(Z_\alpha)\rangle  \nonumber \\
& \ \ + 4\langle  \hat{R}( df(Z_{\bar{\beta}}) , df(Z_\alpha) ) df(Z_\beta), df(Z_{\bar{\alpha}})\rangle  \nonumber \\
=& \  -4( \langle \hat{R}( e_\alpha, e_\beta ) e_\beta, e_\alpha\rangle  + \langle \hat{R}( e_\alpha, e_{\beta}' ) e_{\beta}', e_\alpha\rangle \nonumber  \\
& \ \ + \langle \hat{R}( e_{\alpha}', e_\beta ) e_\beta, e_{\alpha}'\rangle  +  \langle \hat{R}( e_{\alpha}', e_{\beta}' ) e_{\beta}', e_{\alpha}'\rangle  )  \nonumber \\
\geq & \  0 \label{bfp3}
\end{align}
where we have used the assmuption that the sectional curvature of $N$ is nonpositive. Substituting \eqref{bfp1}, \eqref{bfp2} and \eqref{bfp3} into \eqref{e3}, we get \eqref{nsm3}.

Observe that
\begin{align}
f^i_0 f^j_{\alpha} f^k_{\ba} f^l_0 \hat{R}_{j \ kl}^{\ i} = & \langle \hat{R}( df(Z_{\ba}), df(T) ) df(Z_\alpha) , df(T)\rangle  \nonumber\\
= & -(\langle \hat{R}(e_\alpha,e_0)e_0,e_\alpha\rangle  + \langle \hat{R}(e_{\alpha}',e_0)e_0,e_{\alpha}'\rangle ) \nonumber \\
\geq & 0  . \label{btep}
\end{align}
Then \eqref{nsm4} can be easily proved from \eqref{bf2} and \eqref{btep}. 
\end{proof}

%

From now on, we assume that $(N,h)$ is a simply connected Riemannian manifold with nonpositive sectional curvature. Let $\rho$ be the distance to a fixed point $y_0 \in N$. Then $\rho^2$ is smooth on $N$. By the Hession comparison theorem, we have
$$Hess(\rho^2) \geq 2h . $$
For any smooth map $f: M \rightarrow N$, the chain rule gives that
\begin{align*}
\triangle_b (\rho^2 \circ f)  =  d \rho^2(\tau(f;\theta,\hat{\nabla})) + trace_{G_\theta} \: Hess(\rho^2)(d_b f,d_b f).
\end{align*}
Therefore, we can conclude that if $f$ is pseudoharmonic, then
\begin{equation} \label{nd1}
\triangle_b (\rho^2 \circ f) \geq 2|d_b f|^2.
\end{equation}

\section{Cannot-Carath\'eodory distance}
As known, the maximum principle is an important tool to obtain pointwise estimates for solutions of geometric PDEs.
In order to use it in Sasakian manifolds, we need some special exhaustion function to construct a cutoff function. A natural choice is the Carnot-Carath\'eodory distance function.
\begin{dfn} \label{ccd}
Let $(M, HM, J_b, \theta)$ be a strictly pseudoconvex CR manifold. A piecewise $C^1$-curve $\gamma : [0,1] \rightarrow M$ is said to be horizontal if $\gamma' (t) \in HM$ whenever $\gamma' (t)$ exists. The length of $\gamma$ is given by $$ l(\gamma) =\int^1_0 |\gamma'|^{1/2}_{G_\theta} dt . $$
We define the Cannot-Carath\'eodory distance between two points $p,q \in M$ by
$$ d_c(p,q) = inf \{ l(\gamma) | \: \gamma \in C_{p,q} \} $$
where $C_{p,q}$ is the set of all horizontal curves joining $p$ and $q$. We say that $(M, HM, J_b, \theta)$ is complete if it is complete as a metric space. A horizontal curve $\gamma: [0,1] \rightarrow M$ is called length minimizing geodesic if $l(\gamma) =d_c(\gamma(0), \gamma (1))$. Fix $x_0 \in M$, and set $r(x) = d_c(x_0,x)$. The Carnot-Carath\'eodory ball of radius $R$ centered at $x_0$ is denoted by $B_R(x_0) = \{ x \in M | \: r(x) <R \}$.
\end{dfn}

In \cite{s}, R. Strichartz pointed out that if $(M, HM, J_b, \theta)$ is complete, then for any $x_0, x \in M$, there exists at least one length minimizing geodesic $\gamma : [0,1] \rightarrow M$ joining $x_0$ and $x$. Moreover, $\gamma$ can extend to $(-\infty, \infty)$.
We say that $x$ is a cut point of $x_0$, if for any $\epsilon > 0$, $\gamma |_{[0,1+ \epsilon]}$ is no longer a length minimizing geodesic joining $x_0$ and $\gamma (1+ \epsilon)$. The set of all cut points of $x_0$, denoted by $cut(x_0)$, is called the cut locus of $x_0$.
Theorem 1.2 and Proposition 1.2 in \cite{a} assert that the Cannot-Carath\'eodory distance $r$ to a reference point $x_0 $ is smooth on $M \setminus ( cut(x_0) \cup \{ x_0 \} )$.

\begin{dfn}[\cite{ckt}] \label{crcp}
Let $(M, HM, J_b, \theta)$ be a noncompact complete Sasakian $(2n+1)$-manifold with
\begin{equation*}
Ric(X,X) \geq -k |X|^2
\end{equation*}
for all $X \in T_{1,0} M$ and some $k \geq 0$.
We say that $(M, HM, J_b, \theta)$ satisfies CR sub-Laplace comparison property relative to a point $x_0 \in M$, if there exists a positive constant $C_1$
such that the Carnot-Carath\'eodory distance $r$ to $x_0$ satisfies
\begin{equation}
\triangle_b r \leq C_1 (\frac{1}{r}+\sqrt{k})
\end{equation}
on $M \setminus ( cut(x_0) \cup \{ x_0 \} )$ and where $r \geq 1$.
\end{dfn}

\begin{prp}[\cite{ctw}]  \label{hp1}
There exists a positive constant $C_1'$ on Heisenberg group $(\mathbb{H}^n, H \mathbb{H}^n, J_b, \theta)$ such that
\begin{eqnarray}
\triangle_b r  \leq  \frac{C_1'}{r} \label{h1}
\end{eqnarray}
on $M \setminus ( cut(o) \cup \{ o \} )$. Here $r$ is the Carnot-Carath\'eodory distance to the origin $o$.
\end{prp}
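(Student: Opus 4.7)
\textbf{Proof plan for Proposition \ref{hp1}.}

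The plan is to exploit two key features of the Heisenberg group: its Tanaka-Webster connection is flat (so the pseudo-Hermitian Ricci tensor vanishes identically and the pseudo-Hermitian torsion $\tau$ is zero), and it admits a one-parameter family of anisotropic dilations $\delta_\lambda(z,t) = (\lambda z, \lambda^2 t)$ under which the CC distance $r$ is homogeneous of degree one while $\triangle_b$ is homogeneous of degree $-2$. Consequently $r \cdot \triangle_b r$ is invariant under $\delta_\lambda$, so it suffices to establish a uniform bound $\triangle_b r \leq C_1'$ on the unit CC sphere $S = \{x : r(x)=1\} \setminus cut(o)$ and then transport via dilations to obtain the $1/r$ bound at every point of $\mathbb{H}^n \setminus (cut(o) \cup \{o\})$.

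I would set up a Bochner--Riccati argument along a horizontal geodesic. For $x \in \mathbb{H}^n \setminus (cut(o) \cup \{o\})$, let $\gamma : [0, r(x)] \to \mathbb{H}^n$ be the minimizing horizontal geodesic from $o$ to $x$ parametrized by CC arc length. Applying the CR Bochner-type computation underlying Lemma \ref{c1} with $f = r$, together with the commutation relations of Lemma \ref{cc} specialized to $Ric \equiv 0$, $Tor \equiv 0$ and $\tau \equiv 0$, one derives a differential inequality
\begin{equation*}
\frac{d}{ds}(\triangle_b r)(\gamma(s)) \; \leq \; -\frac{1}{2n}(\triangle_b r)^2(\gamma(s)) + C(n) ,
\end{equation*}
where the quadratic term is produced by the Cauchy--Schwarz bound $|\nabla_b d_b r|^2 \geq \frac{1}{2n}(\triangle_b r)^2$ and the additive constant $C(n)$ collects the characteristic-direction coupling terms involving $r_0 = T r$ and $\nabla_b r_0$.

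Finally, I would integrate this Riccati inequality using the small-$s$ behavior $\triangle_b r \sim c_n / s$ as $s \to 0^+$, which is forced by the homogeneity argument above since $s \cdot \triangle_b r$ is dilation invariant along the horizontal ray $\gamma$. Standard ODE comparison then yields $\triangle_b r \leq C_1'/r$ with $C_1'$ depending only on $n$. The main obstacle is the characteristic-direction coupling: commuting two horizontal derivatives produces a $T r$ contribution via \eqref{s6}, and one must verify that $|r_0|$ and $|\nabla_b r_0|$ stay bounded along $\gamma$ so these contributions truly reduce to an additive constant rather than feeding back into terms of order $1/r$ or worse. This is precisely where flatness of the Tanaka-Webster connection on $\mathbb{H}^n$ is essential; any curvature correction in the Bochner formula would inject $r$-dependent growth incompatible with the sharp $1/r$ decay asserted by the proposition.
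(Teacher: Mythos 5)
The paper gives no proof of this proposition: it is quoted from \cite{ctw}, where it is obtained by direct computation from the explicit formula $[r(z,t)]^2 = \phi^2\|z\|^2/(\sin\phi)^2$ for the Carnot--Carath\'eodory distance (the same formula the present paper invokes in the proof of Proposition \ref{hp2}). Your dilation-homogeneity observation is correct and genuinely useful: since $r\circ\delta_\lambda=\lambda r$ and $\triangle_b$ is homogeneous of degree $-2$, the quantity $r\,\triangle_b r$ is $\delta_\lambda$-invariant, so the proposition is equivalent to the single statement that $\triangle_b r$ is bounded above on $\{r=1\}\setminus cut(o)$. But once you have that bound you are already done --- the Riccati machinery adds nothing --- and that bound is exactly where your argument has gaps.

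Three concrete problems. (1) The set $\{r=1\}\setminus cut(o)$ is not compact: the cut locus of $o$ is the center $\{z=0\}$, and as a point of the unit sphere approaches the $t$-axis ($\phi\to\pi$, $\|z\|\to 0$) the second derivatives of $r$ degenerate, so an upper bound for $\triangle_b r$ there must be verified, not inferred from continuity plus compactness; this is precisely what the explicit computation in \cite{ctw} delivers and what your sketch never addresses. (2) Your Riccati inequality is wrong as stated: with $|d_br|\equiv 1$ and $Ric=Tor=0$, the Bochner identity \eqref{bf1} gives $\frac{d}{ds}(\triangle_b r)=-|\nabla_bd_br|^2+4\langle d_br\circ J_b,\nabla_b r_0\rangle$ along $\gamma$, and the coupling term is homogeneous of degree $-2$ under $\delta_\lambda$, hence of size $C/s^2$, not an additive constant $C(n)$. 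Your own flagged verification that $|\nabla_b r_0|$ stays bounded along $\gamma$ is false near the origin ($|Tr|\lesssim 1/r$, but $|\nabla_b(Tr)|\lesssim 1/r^2$ at best), and pinning down the constant in $|\nabla_b r_0|\le C/s^2$ is again a unit-sphere estimate of essentially the same difficulty as the one you are trying to prove. (3) The initial asymptotics $\triangle_b r\sim c_n/s$ as $s\to 0^+$ do not follow from dilation invariance ``along the horizontal ray'': the dilation orbit $\lambda\mapsto\delta_\lambda x$ is not a horizontal curve (unless $t=0$) and is not the geodesic $\gamma$, so $s\,\triangle_b r(\gamma(s))$ is not constant in $s$. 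The honest route is the one in \cite{ctw}: compute $\triangle_b r$ from the closed-form expression for $r$ and check the bound uniformly in $\phi\in[0,\pi)$.
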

Since the pseudohermitian torsion and the pseudohermitian Ricci curvature of Heisenberg group  $(\mathbb{H}^n, H \mathbb{H}^n, J_b, \theta)$ are both zero, Proposition \ref{hp1} asserts that the CR sub-Laplace comparison property holds on Heisenberg group. 

\section{Sub-Gradient Estimate For Pseudoharmonic Map} \label{slp}
In this section, we will obtain a sub-gradient estimate for pseudoharmonic maps.
Let $(M, HM, J_b, \theta)$ be a noncompact complete $(2n+1)$-Sasakian manifold with CR sub-Laplace comparison property relative to a point $x_0 \in M$ and
\begin{equation*}
 Ric(X,X) \geq -k |X|^2
\end{equation*}
for all $X \in T_{1,0} M$ and some $k\geq 0$. Suppose that $(N,h)$ is a simply connected Riemannian manifold with nonpositive sectional curvature. We consider a pseudoharmonic map $f: M \rightarrow N$. Let $\rho $ be the Riemannian distance to $y_0 = f(x_0)$.

We choose a function  $\psi \in C^{\infty} ([0, \infty))$ with the property that
$$ \psi |_{[0,1]} =1, \quad \psi |_{[2, \infty)}=0, \quad -C_2 \: |\psi|^{\frac{1}{2}} \leq \psi ' \leq 0, \quad |\psi ''| \leq C_2 . $$
Let $R > 1 $ be fixed. By CR sub-Laplacian comparison property, the cutoff function $\eta = \psi(\frac{r}{R}) $ satisfies:
\begin{equation}
\begin{gathered}
\eta^{-1} |d_b \eta|^2  \leq   \frac{C_2'}{R^2}  \\
\triangle_b \eta  =  \frac{\psi ''}{R^2} |d_b r|^2 + \frac{\psi '}{R} \triangle_b r \geq - C_2' \: \left( \frac{1}{R^2}+\frac{\sqrt{k}}{R} \right) \label{nseta}
\end{gathered}
\end{equation}
on $M \setminus ( cut(x_0) \cup \{ x_0 \} )$. Here $C_2'$ depends only on $C_2$ and $C_1$. Denote $b_R =2 \: sup \: \{ \rho \circ f(x) | x \in B_{2R} (x_0) \}$. We construct a smooth function $F(x): B_{2R} (x_0)  \rightarrow \mathbb{R}$ by
\begin{equation} \label{nf}
F(x)= \frac{|d_b f|^2 + \mu \eta |f_0|^2}{ b_R^2 - \rho^2 \circ f} (x) .
\end{equation}
The positive coefficient $\mu$ will be determined later. 
\begin{lem} \label{l1}
If $r$ is smooth at $x \in B_{2R} (x_0)$ and $(\eta F) (x) \neq 0$, then at $x$, we have
\begin{align}
& \triangle_b (|d_b f|^2 + \mu \eta |f_0|^2) \geq  \frac{1}{24} \frac{|d_b (|d_b f|^2 + \mu \eta |f_0|^2)|^2}{|d_b f|^2 + \mu \eta |f_0|^2}  \nonumber\\
&  \quad \quad \qquad + \left[ 2n - 6 \mu C_2' ( \frac{1}{R^2}+\frac{\sqrt{k}}{R}) \right] |f_0|^2 -  32 \left( k+ \frac{1}{\mu \eta} \right) |d_b f|^2 . \label{nsm6}
\end{align}
\end{lem}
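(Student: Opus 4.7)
The plan is to set $G = |d_b f|^2 + \mu\eta|f_0|^2$ and compute $\triangle_b G$ via the product rule $\triangle_b(\eta|f_0|^2) = \eta\triangle_b|f_0|^2 + 2\langle d_b\eta, d_b|f_0|^2\rangle + |f_0|^2\triangle_b\eta$, then substitute the Bochner-type lower bounds \eqref{nsm3} (keeping its free parameter $\nu$ at my disposal) and \eqref{nsm4} for $\triangle_b|d_b f|^2$ and $\triangle_b|f_0|^2$. The term $|f_0|^2\triangle_b\eta$ is bounded below by $-\mu C_2'(1/R^2 + \sqrt{k}/R)|f_0|^2$ via \eqref{nseta}. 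The cross term $2\mu\langle d_b\eta, d_b|f_0|^2\rangle$ I would handle through the Kato-type inequality $|d_b|f_0|^2| \le 2|f_0|\,|\nabla_b^f f_0|$ followed by a weighted Cauchy--Schwarz with weight $\mu\eta$, which produces the contribution $-\mu\eta|\nabla_b^f f_0|^2 - (4\mu C_2'/R^2)|f_0|^2$ after invoking $\eta^{-1}|d_b\eta|^2 \le C_2'/R^2$.

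To produce the Yau-type gradient-squared term on the right of \eqref{nsm6}, I would expand $d_b G = d_b|d_b f|^2 + \mu|f_0|^2 d_b\eta + \mu\eta\,d_b|f_0|^2$, apply $|A+B+C|^2 \le 3(|A|^2+|B|^2+|C|^2)$, and use the Kato-type bounds $|d_b|d_b f|^2|^2 \le 4|d_b f|^2\,|\nabla_b^f d_b f|^2$ and $|d_b|f_0|^2|^2 \le 4|f_0|^2\,|\nabla_b^f f_0|^2$ (both of which follow from Cauchy--Schwarz applied to $d_b|d_b f|^2 = 2\langle \nabla_b^f d_b f, d_b f\rangle$ and $d_b|f_0|^2 = 2\langle \nabla_b^f f_0, f_0\rangle$). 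Dividing by $G$ and using the crude estimates $|d_b f|^2/G \le 1$, $\mu\eta|f_0|^2/G \le 1$, and $\mu|f_0|^2/G \le 1/\eta$ (all valid at $x$ because $(\eta F)(x)\neq 0$ forces $\eta(x),G(x)>0$), together with \eqref{nseta}, I expect
\begin{equation*}
\frac{1}{24}\frac{|d_b G|^2}{G} \le \frac{1}{2}|\nabla_b^f d_b f|^2 + \frac{\mu\eta}{2}|\nabla_b^f f_0|^2 + \frac{\mu C_2'}{8R^2}|f_0|^2.
\end{equation*}

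Subtracting this upper bound from the Bochner lower bound on $\triangle_b G$ and specializing to $\nu = \mu\eta$ is the key parameter tuning. The four contributions to the coefficient of $|\nabla_b^f f_0|^2$ then read $+2\mu\eta$ (from \eqref{nsm4}), $-\mu\eta/2$ (from the $\nu$-term in \eqref{nsm3}), $-\mu\eta$ (from the cross-term splitting), and $-\mu\eta/2$ (from the gradient upper bound), which cancel exactly, while the $|\nabla_b^f d_b f|^2$ coefficient becomes $1-1/2 = 1/2 \ge 0$ and is discarded. The residual $|d_b f|^2$ coefficient $-(2k+32/(\mu\eta))$ is at least $-32(k+1/(\mu\eta))$, and the residual $|f_0|^2$ coefficient beyond $2n$ totals $-(1+4+1/8)\mu C_2'(1/R^2+\sqrt{k}/R) = -(41/8)\mu C_2'(1/R^2+\sqrt{k}/R) \ge -6\mu C_2'(1/R^2+\sqrt{k}/R)$, yielding \eqref{nsm6}. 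The central obstacle is exactly this bookkeeping step: I must coordinate three distinct Cauchy--Schwarz weightings (the choice of $\nu$, the splitting weight on the cross term, and the triple-sum constant $3$ in $|A+B+C|^2$) so that every $|\nabla_b^f f_0|^2$ coefficient cancels while the aggregated $|f_0|^2$ constant stays within the announced $6\mu C_2'(1/R^2+\sqrt{k}/R)$ budget; slightly different weightings would alter the constants $1/24$, $6$, $32$ without changing the qualitative form of the inequality.
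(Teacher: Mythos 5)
Your proposal is correct and follows essentially the same route as the paper: the same product-rule expansion of $\triangle_b(\mu\eta|f_0|^2)$, the same weighted Cauchy--Schwarz on the cross term $2\mu\langle d_b\eta,d_b|f_0|^2\rangle$, the choice $\nu=\mu\eta$ in \eqref{nsm3}, and the same combination of the triple Cauchy--Schwarz and the Kato-type bounds \eqref{cs1}--\eqref{cs2} to produce the $\frac{1}{24}|d_bG|^2/G$ term, with identical bookkeeping of constants ($41/8\le 6$, $2k+32/(\mu\eta)\le 32(k+1/(\mu\eta))$). The only cosmetic difference is that by dividing through by $G$ (using $|d_bf|^2/G\le 1$, $\mu\eta|f_0|^2/G\le 1$, $\mu|f_0|^2/G\le 1/\eta$) rather than by $|d_bf|^2$ and $|f_0|^2$ separately, you sidestep the paper's explicit case analysis at points where $|d_bf|(x)=0$ or $|f_0|(x)=0$.
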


\begin{proof}
First we compute 
\begin{align*}
\triangle_b (\mu \eta |f_0|^2) = & \mu[(\triangle_b \eta) |f_0|^2 + 2 \langle d_b \eta, d_b |f_0|^2\rangle  + \eta \: \triangle_b |f_0|^2]  \\
= & \mu[(\triangle_b \eta) |f_0|^2 + 2 \langle d_b \eta, 2 \langle \nabla_b^f f_0, f_0\rangle_{f^*TN} \rangle  + \eta \: \triangle_b |f_0|^2]   \\
\geq & \mu[(\triangle_b \eta) |f_0|^2 - \eta |\nabla_b^f f_0|^2 - 4 |f_0|^2 \eta^{-1} |d_b \eta|^2 + \eta \: \triangle_b |f_0|^2] \\
\geq & \  \mu \eta |\nabla_b^f f_0|^2- 5 \mu C_2' \: \left(\frac{1}{R^2}+\frac{\sqrt{k}}{R} \right) |f_0|^2 .
\end{align*}
The last inequality is due to \eqref{nsm4} and \eqref{nseta}. Hence by \eqref{nsm3} with $\nu = \mu \eta$,  we have the estimate
\begin{align}
& \triangle_b (|d_b f|^2 + \mu \eta |f_0|^2) \geq \frac{1}{2} \left( |\nabla_b^f d_b f|^2 + \mu \eta |\nabla_b^f f_0|^2 \right) +\frac{1}{2} |\nabla_b^f d_b f|^2 \nonumber\\
&  \quad \qquad  + \left[ 2n- 5 \mu C_2' ( \frac{1}{R^2}+\frac{\sqrt{k}}{R}) \right] |f_0|^2 -  32 \left( k+ \frac{1}{\mu \eta} \right) |d_b f|^2 . \label{nsm5}
\end{align}
In order to deal with the first term of the right side, we need the following Schwarz inequalities:
\begin{align}
|d_b |d_b f|^2|^2 \leq & 4 \: |d_b f|^2 \: |\nabla_b^f d_b f|^2 \label{cs1} , \\
|d_b |f_0|^2|^2 \leq & 4 \: |f_0|^2 |\nabla_b^f f_0|^2 . \label{cs2}
\end{align}
If $|d_b f|(x) \neq 0$ and $|f_0|(x) \neq 0$, then at $x$, we have
\begin{align*}
&\frac{1}{2} \left( |\nabla_b^f d_b f|^2 + \mu \eta |\nabla_b^f f_0|^2 \right)  \\
& \qquad \geq  \  \frac{1}{8} \left( \frac{|d_b |d_b f|^2|^2}{|d_b f|^2} + \mu \eta \frac{|d_b |f_0|^2|^2}{|f_0|^2}+ \mu \frac{|d_b \eta|^2}{\eta} |f_0|^2 \right)  \ - \frac{1}{8} \mu \frac{|d_b \eta|^2}{\eta} |f_0|^2  \\
& \qquad \geq  \ \frac{1}{24} \frac{|d_b (|d_b f|^2 + \mu \eta |f_0|^2)|^2}{|d_b f|^2 + \mu \eta |f_0|^2} - \frac{\mu C_2'}{8} \frac{1}{R^2} |f_0|^2.
\end{align*}
Substituting this inequality to \eqref{nsm5}, we get \eqref{nsm6}. If $|d_b f|(x) =0$ (or $|f_0|(x)=0$), we can directly discard the nonnegative term $\frac{1}{2} |\nabla_b^f d_b f|^2$ (or $\frac{1}{2} \mu \eta |\nabla_b^f f_0|^2$) from \eqref{nsm5} and use the Schwarz inequality \eqref{cs1} (or \eqref{cs2}) to obtain \eqref{nsm6}.
\end{proof}

Let $x$ be a maximum point of $\eta F$ on $B_{2R}(x_0)$. If $x$ is not in the cut locus of $x_0$, then $\eta$ is smooth near $x$. If $x$ is in the cut locus of $x_0$, we may remedy $\eta$ by the following consideration. Since $(M, HM, J_b, \theta)$ is complete, there exists a length minimizing geodesic curve $\gamma: [0,1] \rightarrow M$ which joins $x_0$ and $x$. Let $\epsilon$ be a small positive number. Along $\gamma$, $x$ is before the cut point of $\gamma (\epsilon)$. This guarantees that the modified function $\tilde{r}(z) = d_c (z,\gamma(\epsilon)) + \epsilon$ is smooth in the neighborhood of $x$. Moreover, triangle inequality implies that:
$$ r \leq \tilde{r}, \quad and \quad r(x) = \tilde{r} (x) . $$
Set $\tilde{\eta} = \psi (\frac{\tilde{r}}{R})$. Then $\tilde{\eta}$ is smooth near $x$ and
$$ \eta \geq \tilde{\eta}, \quad and \quad \eta(x) = \tilde{\eta}(x).  $$
This means that $x$ is still a maximum point of $\tilde{\eta} F$. Hence, we may assume without loss of generality that $r$ is already smooth near $x$.

\begin{lem} \label{es1}
If x is a nonzero maximum point of $\eta F$ on $B_{2R} (x_0)$, then at $x$, we have the estimate
\begin{equation} \label{nsm7}
0 \geq \left[ 2 \eta F  - 34n ( k+ \frac{1}{\mu} ) \right] \frac{|d_b f|^2}{b^2_R-\rho^2 \circ f}+ \left[  2n - 31 \mu C_2' ( \frac{1}{R^2}+ \frac{\sqrt{k}}{R} ) \right] \frac{F}{\mu} .
\end{equation}
\end{lem}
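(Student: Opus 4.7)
The plan is to invoke the Cheng--Yau maximum principle. By the reduction preceding the lemma, we may assume that $r$, hence $\eta$, is smooth near the nonzero maximum $x$ of $\eta F$, so $\eta F\in C^2$ near $x$ and
\begin{equation*}
d_b(\eta F)(x)=0,\qquad \triangle_b(\eta F)(x)\leq 0.
\end{equation*}
Setting $G=|d_b f|^2+\mu\eta|f_0|^2$ and $H=b_R^2-\rho^2\circ f$, so that $F=G/H$, the first relation is equivalent to $\eta\,d_b F=-F\,d_b\eta$, which combined with $d_b F=d_b G/H-F\,d_b H/H$ gives the critical-point identity
\begin{equation*}
d_b G \;=\; F\,d_b H-\frac{FH}{\eta}\,d_b\eta .
\end{equation*}

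The next step is to expand $\triangle_b(\eta F)=\eta\,\triangle_b F+2\langle d_b\eta,d_b F\rangle+F\,\triangle_b\eta$ and decompose $\triangle_b F$ via the quotient rule. The critical-point identity causes the $|d_b H|^2/H^2$ contributions coming from $-2\langle d_b G,d_b H\rangle/H^2$ and $2G|d_b H|^2/H^3$ to cancel, producing
\begin{equation*}
\eta\,\triangle_b F \;=\; \frac{\eta\,\triangle_b G}{H}+\frac{2F}{H}\langle d_b\eta,d_b H\rangle-\frac{\eta F\,\triangle_b H}{H},
\end{equation*}
while $2\langle d_b\eta,d_b F\rangle=-(2F/\eta)|d_b\eta|^2$. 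Feeding these into $\triangle_b(\eta F)\leq 0$ and rearranging yields the working inequality
\begin{equation*}
\frac{\eta\,\triangle_b G}{H}\;\leq\;-F\triangle_b\eta+\frac{2F}{\eta}|d_b\eta|^2-\frac{2F}{H}\langle d_b\eta,d_b H\rangle+\frac{\eta F\,\triangle_b H}{H}.
\end{equation*}

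I will then substitute three inputs: Lemma \ref{l1} lower-bounds $\triangle_b G$, inequality \eqref{nd1} gives $\triangle_b H\leq -2|d_b f|^2$ (producing $(2\eta F/H)|d_b f|^2$ upon transposition), and \eqref{nseta} bounds $-\triangle_b\eta$ and $|d_b\eta|^2/\eta$ by constant multiples of $C_2'(R^{-2}+\sqrt{k}/R)$ times $F$. The genuine difficulty is the cross term $-(2F/H)\langle d_b\eta,d_b H\rangle$. I would handle it by expanding
\begin{equation*}
\frac{|d_b G|^2}{G}\;=\;\frac{F}{H}\left(|d_b H|^2-\frac{2H}{\eta}\langle d_b\eta,d_b H\rangle+\frac{H^2}{\eta^2}|d_b\eta|^2\right)
\end{equation*}
from the critical-point identity, so that the positive piece $(\eta/(24H))\,|d_b G|^2/G$ supplied by Lemma \ref{l1} contributes a $|d_b H|^2/H^2$ absorbing target of the right size. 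A weighted Cauchy--Schwarz of the shape $(23F/(12H))|\langle d_b\eta,d_b H\rangle|\leq (\eta F/(24H^2))|d_b H|^2+c\,F|d_b\eta|^2/\eta$ then splits the residue between this sink and the cutoff bound \eqref{nseta}.

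Finally, I will collect the coefficients of $|d_b f|^2/H$ and of $\eta|f_0|^2/H$ in the resulting inequality, use $\eta\leq 1$ to replace $\eta k$ by $k$ and $1/(\mu\eta)$ by $1/\mu$ where convenient, and rewrite through $F/\mu=|d_b f|^2/(\mu H)+\eta|f_0|^2/H$ to put the estimate into the shape of \eqref{nsm7}. The chief obstacle is precisely this cross-term Schwarz step: the two natural absorption sinks, $|d_b\eta|^2/\eta$ (controlled by \eqref{nseta}) and $|d_b H|^2/H^2$ (controlled by $|d_b G|^2/G$), demand opposite AM--GM weights, so one must carefully distribute the cross term between both rather than sending it wholly into either.
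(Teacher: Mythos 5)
Your proposal is correct and follows essentially the same route as the paper: the paper simply applies the maximum principle to $\ln(\eta F)$, which linearizes your critical-point identity into \eqref{nsm1} and packages your cross-term Cauchy--Schwarz as the weighted inequality $(\alpha+\beta)^2\le 24\alpha^2+\tfrac{24}{23}\beta^2$ applied to $|d_b G|^2/G^2$, with the $\beta^2$-piece exactly cancelled by the positive term $|d_b(\rho^2\circ f)|^2/(b_R^2-\rho^2\circ f)^2$. All remaining ingredients (Lemma \ref{l1}, the bounds \eqref{nd1} and \eqref{nseta}, and the final substitution $\eta|f_0|^2/(b_R^2-\rho^2\circ f)=\mu^{-1}\bigl(F-|d_bf|^2/(b_R^2-\rho^2\circ f)\bigr)$) are used identically.
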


\begin{proof}
It is obvious that $x$ is still a maximum point of $\ln(\eta F)$ on $B_{2R} (x_0)$.
Since $\triangle_b$ is a degenerate elliptic operator, the maximum principle implies that at $x$,
\begin{align}
0 & =  d_b \ln(\eta F)  =  \frac{d_b \eta}{\eta} + \frac{d_b (|d_b f|^2 + \mu \eta |f_0|^2)}{|d_b f|^2 + \mu \eta |f_0|^2} +\frac{d_b (\rho^2 \circ f)}{b^2_R-\rho^2 \circ f}, \label{nsm1}\\
0 & \geq  \triangle_b \ln(\eta F)  =  \frac{\triangle_b \eta}{\eta} - \frac{|d_b \eta|^2}{\eta^2} + \frac{\triangle_b (|d_b f|^2 + \mu \eta |f_0|^2)}{|d_b f|^2 + \mu \eta |f_0|^2}  \nonumber\\
 & \qquad -\frac{|d_b (|d_b f|^2 + \mu \eta |f_0|^2)|^2}{(|d_b f|^2 + \mu \eta |f_0|^2)^2}+ \frac{\triangle_b (\rho^2 \circ f)}{b^2_R-\rho^2 \circ f} +\frac{|d_b (\rho^2 \circ f)|^2}{(b^2_R- \rho^2 \circ f)^2}. \label{nsm2}
\end{align}
By Lemma \ref{l1}, \eqref{nsm2} becomes
\begin{align*}
0\geq &\   \frac{\triangle_b \eta}{\eta} - \frac{|d_b \eta|^2}{\eta^2} - \frac{23}{24} \frac{|d_b (|d_b f|^2 + \mu \eta |f_0|^2)|^2}{(|d_b f|^2 + \mu \eta |f_0|^2)^2}+\frac{|d_b (\rho^2 \circ f)|^2}{(b^2_R-\rho^2 \circ f)^2} \\
& \ + \frac{ [2n- 6 \mu C_2' ( \frac{1}{R^2}+\frac{\sqrt{k}}{R}) ] |f_0|^2 -  32( k+ \frac{1}{\mu \eta} ) |d_b f|^2}{|d_b f|^2 + \mu \eta |f_0|^2} + \frac{\triangle_b (\rho^2 \circ f)}{b^2_R-\rho^2 \circ f} .
\end{align*}
Substituting \eqref{nsm1} in above inequality and using Schwarz inequality: $(\alpha + \beta)^2 \leq 24 \alpha ^2 + \frac{24}{23} \beta^2$, we obtain
\begin{align*}
0 \geq &\ \frac{\triangle_b \eta}{\eta} - 24 \frac{|d_b \eta|^2}{\eta^2} - 32 \left(k + \frac{1}{\mu \eta} \right) \frac{|d_b f|^2 }{|d_b f|^2 + \mu \eta |f_0|^2}  \\
 &\quad + \left[2n- 6 \mu C_2' \left(\frac{1}{R^2}+\frac{\sqrt{k}}{R} \right)\right]\frac{|f_0|^2 }{|d_b f|^2 + \mu \eta |f_0|^2} +\frac{\triangle_b (\rho^2 \circ f)}{b^2_R-\rho^2 \circ f}.
\end{align*}
By the estimates \eqref{nd1} and \eqref{nseta}, we have
\begin{align}
0 \geq &\ - 25 \frac{C_2'}{\eta} \: \left(\frac{1}{R^2} +\frac{\sqrt{k}}{R} \right) - 32 \left(k + \frac{1}{\mu \eta} \right) \frac{|d_b f|^2 }{|d_b f|^2 + \mu \eta |f_0|^2} \nonumber \\
 &\quad + \left[2n - 6 \mu C_2' \left(\frac{1}{R^2}+\frac{\sqrt{k}}{R} \right)\right]\frac{|f_0|^2 }{|d_b f|^2 + \mu \eta |f_0|^2} +2 \frac{|d_b f|^2}{b^2_R- \rho^2 \circ f} . \nonumber
\end{align}
Hence multiplying both sides by $\eta F$, we conclude that
\begin{align}
0 \geq & -25 C_2' \: \left(\frac{1}{R^2}+\frac{\sqrt{k}}{R} \right)  F - 32 \left( \eta k + \frac{1}{\mu} \right) \frac{|d_b f|^2}{b^2_R -\rho^2 \circ f}   \nonumber \\
& \quad + \left[2n - 6 \mu C_2' (\frac{1}{R^2}+\frac{\sqrt{k}}{R}) \right] \frac{\eta |f_0|^2}{b^2_R-\rho^2 \circ f} + 2 \eta F \frac{|d_b f|^2}{b^2_R -\rho^2 \circ f}.  \label{nsm8}
\end{align}
Finally, we rewrite \eqref{nf} as
$$\frac{ \eta |f_0|^2}{b^2_R -\rho^2 \circ f} = \frac{1}{\mu} (F- \frac{|d_b f|^2}{b^2_R -\rho^2 \circ f}) $$
and substitute it into the previous inequality. This procedure yields
\begin{align*}
0 &\geq \  \left[  2n  - 31 \mu C_2' ( \frac{1}{R^2}+ \frac{\sqrt{k}}{R} ) \right] \frac{F}{\mu} \\
&\qquad + \left[ 2 \eta F - \frac{1}{\mu} \left(  2n   -  6\mu C_2' ( \frac{1}{R^2}+ \frac{\sqrt{k}}{R} ) \right) - 32 ( \eta k+ \frac{1}{\mu}) \right] \frac{|d_b f|^2}{b^2_R-\rho^2 \circ f}  \\
&\geq \ \left[  2n   - 31 \mu C_2' ( \frac{1}{R^2}+ \frac{\sqrt{k}}{R} ) \right] \frac{F}{\mu}+\left[ 2 \eta F - \frac{2 n}{\mu}  - 32 ( k+ \frac{1}{\mu} ) \right] \frac{|d_b f|^2}{b^2_R-\rho^2 \circ f}.
\end{align*}
The last inequality is due to $0 \leq \eta \leq 1$. Since $n \geq 1$, we get \eqref{nsm7}.
\end{proof}

Now we present our main results.
\begin{thm} \label{nsg}
Let $(M, HM, J_b, \theta)$ be a noncompact complete $(2n+1)$-Sasakian manifold with CR sub-Laplace comparison property relative to a fixed point $x_0$ and
\begin{equation*}
 Ric(X,X)  \geq -k |X|^2
\end{equation*}
for all $X \in T_{1,0} M$, and some $k\geq 0$. Suppose that $(N,h)$ is a simply connected Riemannian manifold with nonpositive sectional curvature. Assume that $f: M \rightarrow N$ is a pseudoharmonic map. Let $\rho $ be the Riemannian distance to $y_0 = f(x_0)$. For any $R>1$, set $b_R =2 \: sup \: \{ \rho \circ f(x) | x \in B_{2R} (x_0) \}$ and $a=\frac{R^2}{1 + \sqrt{k} R}$. Then, on $B_R (x_0)$
\begin{equation} \label{nsge}
|d_b f|^2 + a |f_0|^2 \leq C_3 \: b_R^2 \: \left( \frac{1}{a} + k \right)
\end{equation}
where the constant $C_3$ only depends on the dimension of $M$ and $C_1$.
\end{thm}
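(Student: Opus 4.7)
The plan is a standard maximum-principle argument applied to the auxiliary function $\eta F$, exploiting Lemma \ref{es1} at its maximum point. First I would verify that $\eta F$ attains an interior maximum on $\bar B_{2R}(x_0)$. Since $\eta \equiv 0$ outside $B_{2R}(x_0)$, $\eta F$ extends continuously by zero to $\bar B_{2R}(x_0)$. By the definition $b_R = 2\sup\{\rho\circ f(x) : x \in B_{2R}(x_0)\}$, one has $b_R^2 - \rho^2\circ f \geq \tfrac{3}{4}b_R^2 > 0$ on $B_{2R}(x_0)$, so $F$ is nonnegative, finite, and smooth away from $\operatorname{cut}(x_0)$. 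Hence $\eta F$ attains a maximum at some interior point $x$. If $x \in \operatorname{cut}(x_0)$, I would apply the Calabi barrier described in the paragraph preceding Lemma \ref{es1}, which reduces the analysis to the case where $r$ is smooth near $x$ and \eqref{nseta} is available there.

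Next I would apply Lemma \ref{es1} with $\mu$ chosen proportional to $a = R^2/(1+\sqrt{k}R)$, specifically $\mu = \frac{n}{31 C_2'}\, a$, so that
\begin{equation*}
2n - 31 \mu C_2'\left(\frac{1}{R^2}+\frac{\sqrt{k}}{R}\right) \geq n > 0.
\end{equation*}
With this choice, inequality \eqref{nsm7} rewrites as
\begin{equation*}
0 \geq \left[2\eta F(x) - 34n\left(k + \frac{1}{\mu}\right)\right]\frac{|d_b f|^2(x)}{b_R^2 - \rho^2\circ f(x)} + \frac{n F(x)}{\mu}.
\end{equation*}
If $|d_b f|(x) = 0$, the first summand vanishes, forcing $F(x)\leq 0$ and hence $\eta F\equiv 0$ on $B_{2R}(x_0)$, so \eqref{nsge} is immediate. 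Otherwise the $|d_bf|^2/(b_R^2-\rho^2\circ f)$ factor is strictly positive and, dropping the nonnegative second summand, I obtain
\begin{equation*}
\eta F(x) \leq 17n\left(k + \frac{1}{\mu}\right) \leq C'\left(k + \frac{1}{a}\right),
\end{equation*}
for a constant $C'$ depending only on $n$ and $C_1$.

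Finally, since $\eta \equiv 1$ on $B_R(x_0)$, the bound $\eta F(y) \leq \eta F(x)$ for $y \in B_R(x_0)$ reads $F(y) \leq C'(k + 1/a)$. Unpacking \eqref{nf}, using $b_R^2 - \rho^2\circ f(y) \leq b_R^2$, and recalling that $\mu \geq c\, a$ for a constant $c = c(n, C_1)$, I conclude
\begin{equation*}
|d_b f|^2(y) + a\,|f_0|^2(y) \leq C_3\, b_R^2\left(\frac{1}{a} + k\right),
\end{equation*}
which is \eqref{nsge}. The decisive step is the balancing of the free parameter $\mu$: it must be large enough for $\mu|f_0|^2$ to produce the coefficient $a|f_0|^2$ in the final estimate, yet small enough that the error $\mu C_2'(1/R^2 + \sqrt{k}/R)$ arising from $\triangle_b\eta$ in Lemma \ref{l1} does not swallow the good term $2n|f_0|^2$ extracted from the Bochner inequality \eqref{nsm3}. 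The choice $\mu \sim a = R^2/(1+\sqrt{k}R)$ is exactly what makes these two competing requirements match.
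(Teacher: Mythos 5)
Your proof is correct and follows the paper's own argument essentially verbatim: the same choice $\mu=\frac{n}{31C_2'}a$, the same application of Lemma \ref{es1} at a maximum point of $\eta F$, and the same unpacking of $F$ on $B_R(x_0)$ where $\eta\equiv 1$. The only difference is that you treat the degenerate case $|d_bf|(x)=0$ explicitly (where \eqref{nsm7} alone forces $\eta F\equiv 0$), a point the paper leaves implicit; this is a harmless refinement rather than a different route.
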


\begin{rmk}
Our auxiliary function \eqref{nf} for the maximum principle is slightly different from that one introduced in \cite{ckt}. In our case, we omit the variable $t$ in the auxiliary function. This seems to simplify the related estimates even for the pseudoharmonic function case.
\end{rmk}

\begin{proof}
Let $\mu = \frac{n}{31 C_2'} \frac{R^2}{1+ \sqrt{k}R}= \frac{n}{31 C_2'} a$. We consider the auxiliary function $F$ given by \eqref{nf}. Let $x$ be a maximum point of $\eta F$ on $B_{2R} (x_0)$. We assume $(\eta F) (x) \neq 0$ (Otherwise, the following estimate \eqref{n2} is trivial).
Since $2n- 31 \mu C_2' (\frac{1}{R^2} + \frac{\sqrt{k}}{R}) = n >0$,
the last term of the right side in \eqref{nsm7} is positve.
Hence Lemma \ref{es1} yields
\begin{equation} \label{n2}
\max_{z \in B_{2R} (x_0)} \ (\eta F)(z) \leq 17n \left( k+\frac{1}{\mu}\right).
\end{equation}
Since $\eta(z)=1$ for $z \in B_{R}(x_0)$, this inequality  asserts that on $B_{R}(x_0)$
\begin{equation*}
|d_b f|^2 + \mu |f_0|^2 \leq 17n (b_R^2- \rho^2 \circ f ) \left( k+\frac{1}{\mu}  \right) \leq 17n b_R^2 \left( k+\frac{1}{\mu}  \right).
\end{equation*}
Hence \eqref{nsge} can be obtained by choosing a proper constant $C_3$.
\end{proof}

The Reeb energy density is defined by the partial energy density $\frac{1}{2} |df(T)|^2$. From the sub-gradient estimate \eqref{nsge}, we can derive an estimate of Reeb energy density for pseudoharmonic maps and get some vanishing results.
\begin{cor} \label{plr}
Let $(M, HM, J_b, \theta)$ be a noncompact complete Sasakian manifold with CR sub-Laplace comparison property relative to a fixed point $x_0$ and
$$
 Ric(X,X)  \geq -k |X|^2
$$
for all $X \in T_{1,0} M$ and some $k \geq 0$. Suppose that $(N,h)$ is a simply connected Riemannian manifold with nonpositive sectional curvature. Assume that $f: M \rightarrow N$ is a pseudoharmonic map. Let $\rho $ be the Riemannian distance to $y_0 = f(x_0)$. For any $R>1$, set $b_R =2 \: sup \: \{ \rho \circ f(x) | x \in B_{2R} (x_0) \}$ and $a=\frac{R^2}{1 + \sqrt{k} R}$. Then, on $B_R (x_0)$
\begin{equation} \label{reeb}
|f_0|^2 \leq C_3 \: b_R^2 \left( \frac{2}{R^4} + \frac{3k}{R^2} + \frac{k\sqrt{k}}{R} \right) .
\end{equation}
In particular,
\begin{enumerate}[(i)]
\item if $Ric \geq 0$ (i.e. $k=0$) and the image of $f$ satisfies:
\begin{equation*}
\overline{\lim_{R \rightarrow \infty}} R^{-2} \: sup \: \{ \rho \circ f(x) | x \in B_{2R} (x_0)\} =0,
\end{equation*}
then $df(T)=0$.
\item if the pseudohermitian Ricci curvature of $M$ has strictly negative lower bound (i.e. $k>0$) and the image of f satisfies:
\begin{equation*}
\overline{\lim_{R \rightarrow \infty}} R^{-\frac{1}{2}} \: sup \: \{ \rho \circ f(x) | x \in B_{2R}(x_0) \} =0,
\end{equation*}
then $df(T)=0$.
\end{enumerate}
\end{cor}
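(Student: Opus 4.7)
The plan is to read the Corollary off Theorem \ref{nsg} with essentially no new work beyond bookkeeping with the parameter $a = R^2/(1+\sqrt{k}R)$, and then to unpack the two growth hypotheses.

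First, I would invoke Theorem \ref{nsg} on $B_R(x_0)$ to get
$$|d_b f|^2 + a|f_0|^2 \leq C_3\, b_R^2\left(\tfrac{1}{a}+k\right).$$
Discarding the nonnegative $|d_b f|^2$ and dividing by $a$ yields
$$|f_0|^2 \leq C_3\, b_R^2\left(\tfrac{1}{a^2}+\tfrac{k}{a}\right).$$
Now substitute $\tfrac{1}{a}=\tfrac{1}{R^2}+\tfrac{\sqrt{k}}{R}$; expanding $\tfrac{1}{a^2}$ produces a cross term $\tfrac{2\sqrt{k}}{R^3}$, which by AM-GM is at most $\tfrac{1}{R^4}+\tfrac{k}{R^2}$. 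Collecting gives
$$\tfrac{1}{a^2}+\tfrac{k}{a} \leq \tfrac{2}{R^4}+\tfrac{3k}{R^2}+\tfrac{k\sqrt{k}}{R},$$
which is exactly \eqref{reeb}.

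For the two vanishing statements I would fix an arbitrary point $x \in M$ and let $R\to\infty$. In case (i), $k=0$ and \eqref{reeb} becomes $|f_0|^2(x)\leq 2C_3\,b_R^2/R^4$; the hypothesis $\overline{\lim}_{R\to\infty} R^{-2}\sup\{\rho\circ f : B_{2R}(x_0)\}=0$ forces $b_R/R^2 \to 0$, hence $|f_0|(x)=0$. In case (ii), $k>0$ is fixed, and it suffices to note that each of the three terms on the right-hand side of \eqref{reeb} is bounded by a constant (depending on $k$) times $b_R^2/R$: indeed $b_R^2/R^4 = (b_R^2/R)\cdot R^{-3}$ and $b_R^2/R^2 = (b_R^2/R)\cdot R^{-1}$. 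The hypothesis $\overline{\lim}_{R\to\infty} R^{-1/2}\sup\{\rho\circ f : B_{2R}(x_0)\}=0$ gives $b_R^2/R\to 0$, so again $|f_0|(x)=0$.

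There is no real obstacle: the content of the Corollary is entirely contained in Theorem \ref{nsg}, and the only minor step is the AM-GM bound used to combine the $1/a^2$ and $k/a$ terms into the three-term expression appearing in \eqref{reeb}. The slightly delicate point, which is worth stating explicitly in the write-up, is that the vanishing conclusions are obtained by fixing $x$ and sending $R\to\infty$, using that the right-hand side of \eqref{reeb} is independent of $x$ on $B_R(x_0)$.
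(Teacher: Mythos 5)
Your proposal is correct and follows exactly the route the paper intends: the paper states Corollary \ref{plr} as an immediate consequence of Theorem \ref{nsg} without writing out a proof, and your computation (drop $|d_b f|^2$, divide by $a$, expand $1/a = 1/R^2 + \sqrt{k}/R$ and absorb the cross term $2\sqrt{k}/R^3 \le 1/R^4 + k/R^2$ to reach the stated three-term bound, then let $R\to\infty$ at a fixed point) is precisely the omitted bookkeeping.
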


The sub-gradient estimate \eqref{nsge} also gives Liouville theorem for pseudoharmonic maps.
\begin{thm} \label{cse}
Let $(M, HM, J_b, \theta)$ be a noncompact complete Sasakian manifold with nonnegative pseudohermitian Ricci curvature, and satisfy CR sub-Laplace comparison property relative to a fixed point $x_0 \in M$. Suppose that $(N,h)$ is a simply connected Riemannian manifold with nonpositive sectional curvature. Assume that $f: M \rightarrow N$ is a pseudoharmonic map. Let $\rho $ be the Riemannian distance to $y_0 = f(x_0)$. For any $R>1$, set $b_R =2 \: sup \: \{ \rho \circ f(x) | x \in B_{2R} (x_0) \}$. Then, on $B_R (x_0)$
\begin{equation*}
|d_b f|^2 + R^2 |f_0|^2 \leq C_3 \: \frac{b_R^2}{R^2}.
\end{equation*}
In particular, if the image of f satisfies
\begin{equation*}
\overline{\lim_{R \rightarrow \infty}} R^{-1} \: sup \: \{ \rho \circ f(x) | x \in B_{2R}(x_0) \} =0,
\end{equation*}
then f is a constant map.
\end{thm}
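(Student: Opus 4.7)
The plan is to deduce Theorem \ref{cse} as a direct specialization of the sub-gradient estimate in Theorem \ref{nsg}, followed by a standard letting-$R$-tend-to-infinity argument. No fresh Bochner computation, no new auxiliary function, and no new maximum-principle argument is required: the substantive analytic work has been packaged inside Theorem \ref{nsg}.

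First I would verify that the hypotheses of Theorem \ref{nsg} all hold with the parameter $k = 0$. Nonnegative pseudohermitian Ricci curvature is exactly the inequality $Ric(X,X) \geq -k|X|^2$ at $k=0$, and the remaining hypotheses (completeness of $(M,HM,J_b,\theta)$, the CR sub-Laplace comparison property at $x_0$, simply connected $(N,h)$ of nonpositive sectional curvature, and pseudoharmonicity of $f$) match verbatim. With $k = 0$ the parameter $a = R^2/(1+\sqrt{k}R)$ in \eqref{nsge} simplifies to $a = R^2$, and the right-hand side $C_3 b_R^2 (1/a + k)$ collapses to $C_3 b_R^2/R^2$. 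Substitution into \eqref{nsge} yields the asserted estimate $|d_b f|^2 + R^2 |f_0|^2 \leq C_3 b_R^2 / R^2$ on $B_R(x_0)$. A minor point to verify is that $b_R$ is finite, but this is immediate since $\overline{B_{2R}(x_0)}$ is compact (by completeness of the Carnot-Carath\'eodory metric, via the result of Strichartz recalled after Definition \ref{ccd}) and $f$ is continuous.

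For the Liouville part, I would fix an arbitrary point $p \in M$. For all $R$ large enough that $p \in B_R(x_0)$, the pointwise version of the estimate at $p$ gives
\[
|d_b f|^2(p) + R^2 |f_0|^2(p) \leq C_3 \, \frac{b_R^2}{R^2}.
\]
The growth hypothesis $\overline{\lim}_{R \to \infty} R^{-1} \sup \{\rho \circ f(x) \mid x \in B_{2R}(x_0)\} = 0$ is exactly $b_R / R \to 0$, so the right-hand side tends to zero as $R \to \infty$. This forces $d_b f(p) = 0$ and $f_0(p) = 0$. Using the decomposition $TM = HM \oplus \mathbb{R} T$, we conclude $df(p) = 0$; since $p$ was arbitrary and $M$ is connected, $f$ is constant. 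There is no genuine obstacle in this step—the only thing worth checking is that the $R^2 |f_0|^2$ portion of the estimate is strong enough to kill $f_0$ individually, which it is, precisely because the coefficient $a$ in \eqref{nsge} becomes as large as $R^2$ when $k=0$.
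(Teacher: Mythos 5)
Your proposal is correct and matches the paper's (implicit) argument exactly: the paper presents Theorem \ref{cse} as an immediate consequence of the sub-gradient estimate \eqref{nsge} specialized to $k=0$, where $a=R^2$, followed by letting $R\to\infty$ under the growth hypothesis. Nothing further is needed.
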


Since Heisenberg group $(\mathbb{H}^n, H \mathbb{H}^n, J_b, \theta)$ satisfies CR sub-Laplace comparison property, Theorem \ref{cse} can be applied to Heisenberg group.
\begin{cor} \label{hm1}
There is no bounded pseudoharmonic map from Heisenberg group $(\mathbb{H}^n, H \mathbb{H}^n, J_b, \theta)$ to a simply connected Riemannian manifold with nonpositive sectional curvature.
\end{cor}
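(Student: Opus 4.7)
The plan is to deduce this corollary directly from Theorem \ref{cse} by checking that all of its hypotheses are satisfied by the Heisenberg group together with the boundedness assumption on $f$. First I would verify the structural hypotheses: the Heisenberg group $(\mathbb{H}^n, H\mathbb{H}^n, J_b, \theta)$ is noncompact, complete, and Sasakian (as recalled in the Example), and its Tanaka-Webster connection is flat, so in particular $Ric \equiv 0$, which trivially gives nonnegative pseudohermitian Ricci curvature ($k=0$). The CR sub-Laplace comparison property relative to the origin $o$ is precisely the content of Proposition \ref{hp1}, since the torsion and Ricci curvature both vanish. Hence Theorem \ref{cse} applies with $x_0 = o$ and $y_0 = f(o)$.

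Next I would exploit the boundedness of the image. Since $f(\mathbb{H}^n)$ is a bounded subset of $N$, the function $\rho \circ f$ is uniformly bounded by some constant $M < \infty$. Consequently $b_R = 2 \sup\{\rho \circ f(x) \mid x \in B_{2R}(o)\} \leq 2M$ for every $R>1$, independently of $R$. In particular the vanishing growth condition
\[
\overline{\lim_{R \to \infty}} \, R^{-1} \sup\{\rho \circ f(x) \mid x \in B_{2R}(o)\} = 0
\]
holds trivially, so the conclusion of Theorem \ref{cse} gives that $f$ is a constant map.

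Alternatively, and perhaps more transparently, one can use the quantitative estimate from Theorem \ref{cse} directly: on $B_R(o)$ one has
\[
|d_b f|^2 + R^2 |f_0|^2 \leq C_3 \, \frac{b_R^2}{R^2} \leq \frac{4 C_3 M^2}{R^2}.
\]
Fixing an arbitrary point $x \in \mathbb{H}^n$ and letting $R \to \infty$, the right side tends to zero, so $|d_b f|(x) = 0$ and $|f_0|(x) = 0$. Since $df = d_b f + \theta \otimes f_0$ under the decomposition $TM = HM \oplus \mathbb{R}T$, this forces $df \equiv 0$, so $f$ is constant. There is essentially no obstacle here: the corollary is a clean specialization, and the only thing to be careful about is to point out that the bounded image hypothesis is used precisely to make $b_R$ uniformly bounded in $R$, which in turn activates the decay rate $R^{-2}$ in the estimate.
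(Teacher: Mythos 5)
Your proposal is correct and is exactly the argument the paper intends: the Heisenberg group satisfies all hypotheses of Theorem \ref{cse} (via the Example and Proposition \ref{hp1}), and boundedness of the image makes $b_R$ uniformly bounded, so the growth condition holds and $f$ must be constant. The quantitative version you add at the end is the same estimate specialized, so there is nothing further to check.
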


\section{Appendix} \label{slh}
In this section, we will derive a Reeb energy density estimate for harmonic maps from Sasakian manifolds to Riemannian manifolds. We recall the definition of harmonic maps. Let $(M, HM, J_b, \theta)$ be a strictly pseudoconvex CR manifold, and let $\nabla^\theta$ be the Levi-Civita connection of $(M, g_\theta)$. Let $(N,h)$ be a Riemannian manifold, and $\hat{\nabla}$ its Levi-Civita connection. Suppose that $f: M \rightarrow N$ is a smooth map. Let $f^*TN$ be the pullback bundle and $\nabla^f$ the pullback connection. We can determine a connection $\nabla^{f, \theta}$ in $T^*M \otimes f^*TN$ by
$$
\nabla^{f, \theta}_X (\omega \otimes \xi )= \nabla^{ \theta}_X \omega \otimes \xi + \omega \otimes \nabla_X^f \: \xi$$
for any $X \in \Gamma(TM)$, $\omega \in \Gamma(T^*M)$ and $\xi \in \Gamma(f^*TN)$. So $f$ is harmonic if
$$
\tau^\theta (f;\theta,\hat{\nabla}) = trace_{g_\theta} (\nabla^{f,\theta} df) =0.
$$
With respect to the local orthonormal frame $\{ \theta, \theta^\alpha, \theta^{\ba}\}$ in $T^*M \otimes \mathbb{C}$ and $\{ \xi_i \}$ in $TN$, we have
\begin{align} \label{b1}
\tau^\theta (f;\theta,\hat{\nabla}) (f)  =  (f^i_{\alpha \ba} + f^i_{\ba \alpha} + f^i_{00} ) \xi_i.
\end{align}
Comparing with the equation \eqref{ph}, we obtain
\begin{equation} \label{phr}
\tau^\theta (f;\theta,\hat{\nabla}) (f) = \tau(f;\theta,\hat{\nabla})(f) + \nabla_T^f df(T).
\end{equation}

As above, we need a Bochner-type formula for harmonic maps and a special exhaustion function.
\begin{lem}
Let $f: M \rightarrow N$ be a smooth map. Then
\begin{align}
\frac{1}{2} \triangle |d f(T)|^2 =& \ | \nabla^f f_0 |^2 + \langle df(T), \nabla_T^f \; \tau^\theta (f;\theta,\hat{\nabla})\rangle  + 2 f^i_0 f^j_{\alpha} f^k_{\ba} f^l_0 \hat{R}_{j \ kl}^{\ i} \nonumber\\
&\  +2( f^i_0 f^i_\beta A_{\bb \ba , \alpha} + f^i_0 f^i_{\bb} A_{\beta \alpha , \ba} + f^i_0 f^i_{\bb \ba} A_{\beta \alpha} +f^i_0 f^i_{\beta \alpha} A_{\bb \ba} ) , \label{bf3}
\end{align}
where $\triangle$ is the Laplacian operator in $(M, g_\theta)$.
\end{lem}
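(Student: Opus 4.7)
The plan is to derive \eqref{bf3} by adding a Reeb-direction computation to the sub-Laplacian version \eqref{bf2} already established in Lemma \ref{c1}. The key preliminary is the scalar identity $\triangle u = \triangle_b u + T(Tu)$, valid for any $u \in C^2(M)$ on a strictly pseudoconvex CR manifold: applying \eqref{b1} to a real-valued smooth function (for which the Levi-Civita tension $\tau^\theta$ reduces to $\triangle u$) gives $\triangle u = u_{\alpha \ba} + u_{\ba \alpha} + u_{00}$ in the Tanaka-Webster frame, and the final term simplifies to $T(Tu)$ because $J_b T = 0$ and $\nabla J_b = 0$ force $\nabla_T T \in \mathbb{R}T$, after which $g_\theta(T, T) = 1$ together with $\nabla g_\theta = 0$ pins $\nabla_T T = 0$.

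Applying this identity to $u = |f_0|^2$, a direct two-fold differentiation along $T$ yields
\begin{align*}
\tfrac{1}{2} T(T|f_0|^2) = |\nabla_T^f f_0|^2 + \langle f_0, \nabla_T^f \nabla_T^f f_0 \rangle.
\end{align*}
To rewrite the third-derivative term, I would invoke \eqref{phr} in the form $\nabla_T^f f_0 = \tau^\theta(f;\theta,\hat\nabla) - \tau(f;\theta,\hat\nabla)$; differentiating once more along $T$ then gives $\nabla_T^f \nabla_T^f f_0 = \nabla_T^f \tau^\theta - \nabla_T^f \tau$. Adding $\tfrac{1}{2} T(T|f_0|^2)$ to \eqref{bf2}, the two $\langle f_0, \nabla_T^f \tau \rangle$ contributions cancel; the orthogonal decomposition $|\nabla_b^f f_0|^2 + |\nabla_T^f f_0|^2 = |\nabla^f f_0|^2$ (in the $g_\theta$-orthonormal frame $\{Z_\alpha, Z_{\ba}, T\}$) collapses the quadratic pieces; and the curvature and $A$-tensor terms from \eqref{bf2} are carried through unchanged. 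This produces \eqref{bf3}.

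The main obstacle is justifying the scalar identity $\triangle = \triangle_b + T^2$ on a general (not necessarily Sasakian) strictly pseudoconvex CR manifold. The cleanest route is to compare the Levi-Civita and Tanaka-Webster Hessians via the difference tensor $D = \nabla^\theta - \nabla$: both connections are $g_\theta$-compatible, so $D$ is antisymmetric in its last two slots, and the Tanaka-Webster torsion has vanishing $g_\theta$-trace, because $\tau$ interchanges $T_{1,0} M$ and $T_{0,1} M$ and hence has no diagonal entries in any $g_\theta$-orthonormal frame of $HM$, while the $2 d\theta(\cdot,\cdot) T$ components of the torsion pair trivially with the horizontal directions. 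These two properties force $\sum_a D(e_a, e_a) = 0$ in any $g_\theta$-orthonormal frame, so the Levi-Civita and Tanaka-Webster traces of $\nabla df$ agree, which is precisely the content of \eqref{b1} specialized to a function. Once this identity is in hand, the rest of the argument is routine bookkeeping.
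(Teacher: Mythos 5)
Your proposal is correct and follows essentially the same route as the paper: decompose $\triangle = \triangle_b + T(T\cdot)$ on the scalar $|f_0|^2$, add the resulting $|\nabla_T^f f_0|^2 + \langle f_0, \nabla_T^f\nabla_T^f f_0\rangle$ to \eqref{bf2}, and use \eqref{phr} to trade $\nabla_T^f\nabla_T^f f_0$ for $\nabla_T^f\tau^\theta - \nabla_T^f\tau$, which the paper does in the identity \eqref{b2} together with the displayed computation of $\langle df(T), \nabla_T^f\tau^\theta\rangle$. Your justification of the scalar identity $\triangle u = \triangle_b u + T(Tu)$ via the trace-free contorsion and $\nabla_T T = 0$ is sound and in fact fills in a step the paper takes for granted through \eqref{b1}.
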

\begin{proof}
On the one hand, we notice that
\begin{align} \label{b2}
\frac{1}{2} \triangle |d f(T)|^2 =& \frac{1}{2} \triangle_b |d f(T)|^2  + \frac{1}{2} (f^i_0 f^i_0)_{00}= \frac{1}{2} \triangle_b |d f(T)|^2 + f^i_{00} f^i_{00} + f^i_0 f^i_{000} .
\end{align}
On the other hand, by \eqref{b1}, we have
\begin{align*}
\langle df(T), \nabla_T^f \; \tau^\theta (f;\theta,\hat{\nabla})\rangle =& \langle df(T), \nabla_T^f \; \tau(f;\theta,\hat{\nabla})\rangle+ \langle df(T), \nabla_T^f \nabla_T^f df(T) \rangle \\
=& \langle df(T), \nabla_T^f \; \tau(f;\theta,\hat{\nabla})\rangle+ f^i_0 f^i_{000} .
\end{align*}
Hence substituting the above equation and \eqref{bf2} into \eqref{b2}, we get \eqref{bf3}.
\end{proof}

\begin{lem} \label{sb1}
Let $(M, HM, J_b, \theta)$ be a Sasakian manifold, and $(N,h)$ a Riemannian manifold with nonpositive sectional curvature. If $f: M \rightarrow N$ is a harmonic map, then
\begin{equation}
\frac{1}{2} \triangle |d f(T)|^2  \geq  | \nabla^f f_0 |^2 . \label{bte3}
\end{equation}
\end{lem}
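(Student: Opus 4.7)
The plan is to specialize the Bochner-type formula \eqref{bf3} to the hypotheses at hand and discard the nonnegative terms, just as was done in the proof of \eqref{nsm4} in Lemma \ref{bte}.

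First, I would note that because $M$ is Sasakian, the pseudo-Hermitian torsion $\tau$ vanishes, so all the coefficients $A_{\alpha\beta}$ and $A_{\bb\ba}$ (and by covariant differentiation also $A_{\bb\ba,\alpha}$ and $A_{\beta\alpha,\ba}$) are identically zero. Consequently the entire last line of \eqref{bf3}, namely
\[
2\bigl(f^i_0 f^i_\beta A_{\bb\ba,\alpha} + f^i_0 f^i_{\bb} A_{\beta\alpha,\ba} + f^i_0 f^i_{\bb\ba} A_{\beta\alpha} + f^i_0 f^i_{\beta\alpha} A_{\bb\ba}\bigr),
\]
drops out.

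Next, since $f$ is harmonic, $\tau^\theta(f;\theta,\hat{\nabla}) = 0$, so its covariant derivative $\nabla^f_T \tau^\theta(f;\theta,\hat{\nabla})$ vanishes as well, killing the term $\langle df(T), \nabla^f_T \tau^\theta(f;\theta,\hat\nabla)\rangle$ in \eqref{bf3}.

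Finally, for the curvature term I would reuse the computation \eqref{btep} verbatim: writing $e_\alpha = \mathrm{Re}\,df(Z_\alpha)$, $e_\alpha' = \mathrm{Im}\,df(Z_\alpha)$, and $e_0 = df(T)$, one has
\[
f^i_0 f^j_{\alpha} f^k_{\ba} f^l_0 \hat{R}^{\ i}_{j\ kl} = -\bigl(\langle \hat{R}(e_\alpha,e_0)e_0, e_\alpha\rangle + \langle \hat{R}(e_\alpha',e_0)e_0, e_\alpha'\rangle\bigr) \geq 0,
\]
where the nonpositivity of the sectional curvature of $N$ gives the last inequality. Substituting all three observations into \eqref{bf3} yields \eqref{bte3} immediately. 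There is no real obstacle here; the only thing to double-check is that $\tau = 0$ on a Sasakian manifold implies the vanishing of both $A_{\alpha\beta}$ and its covariant derivatives $A_{\alpha\beta,\gamma}$, $A_{\bb\ba,\alpha}$, etc., which follows from $\nabla A = 0$ being a consequence of $\nabla \tau = 0$ (since $\tau$ is determined by the components of $A$ and the connection preserves the bigrading).
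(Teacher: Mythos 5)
Your proof is correct and is exactly the argument the paper intends: the paper's proof is the one-line remark that the lemma ``follows from \eqref{btep} and \eqref{bf3},'' i.e.\ drop the torsion terms (Sasakian $\Rightarrow A\equiv 0$, hence all its covariant derivatives vanish trivially), kill the tension-field term (harmonic $\Rightarrow \tau^\theta(f;\theta,\hat{\nabla})=0$), and use \eqref{btep} for the curvature term. Your only slightly roundabout step is justifying $A_{\alpha\beta,\gamma}=0$ via $\nabla\tau=0$; it is immediate since $A\equiv 0$ identically.
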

The proof follows from \eqref{btep} and \eqref{bf3}.
\begin{dfn} \label{crch}
Let $(M, HM, J_b, \theta)$ be a Sasakian manifold with
$$Ric(X,X) \geq -k |X|^2$$
for any $X \in T_{1,0} M$, and some $k \geq 0$.
We say that $(M, HM, J_b, \theta)$ satisfies CR Laplace comparison property relative to a fixed point $x_0 \in M$, if there exists a positive constant $C_4$ such that the Carnot-Carath\'eodory distance $r$ to $x_0$ satisfies
\begin{eqnarray}
\triangle r & \leq & C_4 \: (\frac{1}{r}+ \sqrt{k}) \\
|d r|_{g_\theta} & \leq & C_4
\end{eqnarray}
on $M \setminus ( cut(x_0) \cup \{ x_0 \} )$ and where $r \geq 1$. 
\end{dfn}

On Heisenberg group $(\mathbb{H}^n, H \mathbb{H}^n, J_b, \theta)$, the square of the Carnot-Carath\'eodory distance function $r$ to the origin has the following expression
\begin{equation}
[r(z,t)]^2 = \frac{\phi^2}{(\sin \phi)^2} ||z||^2
\end{equation}
where $||z||^2 = \sum_{\alpha =1}^{n} |z^\alpha|^2$, $\phi$ is the unique solution of $\chi(\phi) ||z||^2 =|t|$ in the interval $[0, \pi)$ and $\chi(\phi)= \frac{\phi}{(\sin \phi)^2} - \cot \phi$. See \cite{ckt, ctw} for details.
\begin{prp} \label{hp2}
On Heisenberg group $(\mathbb{H}^n, H \mathbb{H}^n, J_b, \theta)$, there exists a positive constant $C_4'$ such that the Carnot-Carath\'eodory distance $r$ to the origin $o$ satisfies
\begin{eqnarray}
\triangle r & \leq & \frac{C_4'}{r} \label{h2}\\
|d r|_{g_\theta}^2 & \leq & C_4' \label{h3}
\end{eqnarray}
on $M \setminus ( cut(o) \cup \{ o \} )$ and where $r \geq 1$. Therefore, $(\mathbb{H}^n, H \mathbb{H}^n, J_b, \theta)$ satisfies CR Laplace comparison property relative to the origin.
\end{prp}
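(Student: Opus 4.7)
The plan is to split the Webster Laplacian on $\mathbb{H}^n$ as $\triangle=\triangle_b+T^2$, apply Proposition~\ref{hp1} to the sub-Laplace piece, and control the remaining Reeb-direction terms $Tr$ and $T^2r$ by Heisenberg dilation scaling. First, in the left-invariant orthonormal frame $\{T,X_\alpha,Y_\alpha\}$ for $g_\theta$, with $X_\alpha=\partial_{x^\alpha}+2y^\alpha\partial_t$ and $Y_\alpha=\partial_{y^\alpha}-2x^\alpha\partial_t$, the only non-zero Lie brackets are $[X_\alpha,Y_\alpha]=-4T$; the Koszul formula immediately yields $\nabla^\theta_{e_i}e_i=0$ for each frame vector, so for every smooth $u$
$$
\triangle u=T^2u+\sum_\alpha(X_\alpha^2u+Y_\alpha^2u)=T^2u+\triangle_bu.
$$
Since also $|dr|^2_{g_\theta}=(Tr)^2+|d_br|^2_{G_\theta}$, the proposition reduces to controlling the three quantities $|d_br|_{G_\theta}$, $Tr$, and $T^2r$ on $\{r\ge1\}\setminus(cut(o)\cup\{o\})$.

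For the horizontal gradient I would invoke the sub-Riemannian eikonal identity $|d_br|_{G_\theta}=1$ on the smooth part of $r$. The lower bound comes from differentiating $r\circ\gamma(s)=s$ along a unit-speed horizontal minimizing geodesic $\gamma$ from $o$ to a non-cut point, while the upper bound follows from the $1$-Lipschitz property of $r$ with respect to $d_c$ tested along an arbitrary horizontal curve issuing from that point.

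For the Reeb derivatives I would exploit the Heisenberg dilations $\delta_\lambda(z,t)=(\lambda z,\lambda^2 t)$. These are CR automorphisms fixing $o$, preserve $cut(o)$, and scale horizontal vectors by $\lambda$, so that $r\circ\delta_\lambda=\lambda r$. Chain-rule differentiation in $t$ (using $T=\partial_t$) then shows that $Tr$ is homogeneous of degree $-1$ and $T^2r$ of degree $-3$ under $\delta_\lambda$. From the explicit formula $r^2=\phi^2\|z\|^2/\sin^2\phi$ together with $\chi(\phi)\|z\|^2=|t|$ and $\chi'>0$ on $(0,\pi)$ (so that $\phi$ is a smooth implicit function of $(z,t)$ on the smooth part of $r$), one verifies that $Tr$ and $T^2r$ extend continuously to the whole CC unit sphere $\{r=1\}$, including to the two cut points $(0,\pm 1/\pi)$ where the asymptotic expansion $r^2\sim\pi|t|$ as $\phi\nearrow\pi$ produces finite limits. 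Compactness of $\{r=1\}$ then gives a uniform bound $|Tr|,|T^2r|\le K$ on $\{r=1\}\setminus cut(o)$, and homogeneity upgrades this to
$$
|Tr|\le K/r,\qquad |T^2r|\le K/r^3\qquad\text{on }\{r\ge1\}\setminus(cut(o)\cup\{o\}).
$$

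Assembly is then routine: $|dr|^2_{g_\theta}\le 1+K^2$ gives \eqref{h3}, while $\triangle r=\triangle_br+T^2r\le C_1'/r+K/r$ via Proposition~\ref{hp1} gives \eqref{h2} with $C_4'=\max(1+K^2,\,C_1'+K)$. The CR Laplace comparison property in the sense of Definition~\ref{crch} follows at once, taking $k=0$ since the pseudohermitian Ricci curvature of $\mathbb{H}^n$ vanishes. The main obstacle is the boundedness step: one must verify carefully that $\partial_tr$ and $\partial_t^2r$ do not blow up as one approaches the cut points $(0,\pm1/\pi)$ along the smooth side of $\{r=1\}$, which is a concrete but delicate asymptotic analysis of the implicit function $\phi(z,t)$ and its $t$-derivatives in the regime $\phi\nearrow\pi$; once this is in hand, everything else is automatic.
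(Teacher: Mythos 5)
Your overall architecture --- the splitting $\triangle=\triangle_b+T^2$ for the left-invariant orthonormal frame, the eikonal identity $|d_br|_{G_\theta}=1$, and Proposition~\ref{hp1} for the sub-Laplacian piece --- matches what the paper does (the paper uses $\triangle r=\triangle_b r+TTr$ and $|dr|^2=|d_br|^2+(Tr)^2$ without comment), and your dilation-homogeneity observation correctly explains \emph{why} the bounds must take the form $|Tr|\le K/r$, $|T^2r|\le K/r^3$. But as written the proposal has a genuine gap, and you name it yourself: the entire quantitative content of the proposition is the claim that $Tr$ and $T^2r$ stay bounded on $\{r=1\}\setminus cut(o)$, which requires controlling the implicit function $\phi(z,t)$ and its two $t$-derivatives in the degenerate regime $\phi\nearrow\pi$ (and also $\phi\searrow 0$, which you do not mention). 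Deferring this as ``one verifies'' / ``the main obstacle'' leaves the proof incomplete precisely where it has content; moreover the compactness step needs boundedness on the \emph{non-compact} set $\{r=1\}\setminus cut(o)$, so the continuous extension to the two punctures is not a cosmetic remark but the load-bearing step.

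The paper closes exactly this gap by a device that makes the asymptotic analysis unnecessary: differentiate $r^2$ rather than $r$. Implicit differentiation of $\chi(\phi)\|z\|^2=|t|$ gives $\partial\phi/\partial t=\frac{1}{2\|z\|^2}\frac{\sin^3\phi}{\sin\phi-\phi\cos\phi}$, whence the remarkably clean identities $Tr^2=\phi$ and $TTr^2=\frac{1}{r^2}\cdot\frac{\sin^5\phi}{\phi^2(\sin\phi-\phi\cos\phi)}$. The first is bounded by $\pi$ outright; the second is $O(1/r^2)$ because the $\phi$-factor is bounded on all of $(0,\pi)$ (it tends to $3$ as $\phi\to 0$ and to $0$ as $\phi\to\pi$). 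Feeding these into $Tr^2=2rTr$ and $TTr^2=2r\,TTr+2(Tr)^2$ yields $|Tr|\le \tilde C_4/r$ and $|TTr|\le \tilde C_4/r^3$ on the whole smooth locus in one stroke --- no dilation argument, no compactness, no limit analysis at the cut points. I would recommend you either carry out the $\phi\nearrow\pi$ asymptotics you flagged (which is doable: $r^2\sim\pi|t|$ and $\|z\|^2\sim|t|\sin^2\phi/\pi$ do give finite limits $Tr\to\pi/(2r)$ and $TTr\to-\pi^2/(4r^3)\cdot$const), or simply replace that portion of your argument with the $r^2$-computation above, keeping your homogeneity remark as a consistency check on the exponents.
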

\begin{proof}
We first calculate $T r$ and $TT r$ on $M \setminus ( cut(o) \cup \{ o \} )$. When $t>0$, we take the partial derivative along $\frac{\partial}{\partial t}$ of $\chi(\phi) ||z||^2 =|t|$ and use the expression of $\chi$. The result is
\begin{equation*}
\frac{\partial \phi}{\partial t} = \frac{1}{2||z||^2} \: \frac{(\sin \phi)^3}{\sin \phi - \phi \cos \phi}.
\end{equation*}
Therefore,
\begin{align*}
T r^2 & =\ \frac{\partial r^2}{\partial t}  = \phi , \\
TT r^2 & =\ \frac{\partial^2 r^2}{\partial t^2} = \frac{1}{r^2} \frac{(\sin \phi)^5}{\phi^2 \: (\sin \phi - \phi \cos \phi)} .
\end{align*}
Since $TT r^2 = 2r \: TTr + 2 |Tr|^2$, there exists a constant $\tilde{C_4}$ such that
\begin{equation} \label{tr}
|Tr| \leq \frac{\tilde{C_4}}{r}, \quad |TTr| \leq \frac{\tilde{C_4}}{r^3} .
\end{equation}
When $t<0$, we can do the similar calculations and obtain the same inequality \eqref{tr}. When $t=0$, we can use the continuity property to get the same estimate \eqref{tr}, since $r$ is smooth on $M \setminus ( cut(o) \cup \{ o \} )$. Hence the inequalities \eqref{tr} always hold on $M \setminus ( cut(o) \cup \{ o \} )$. From Proposition \ref{hp1}, there exists a constant $\tilde{C_4'}$ such that
\begin{equation} \label{ccde}
\triangle_b r  \leq  \frac{\tilde{C_4'}}{r}
\end{equation}
on $M \setminus ( cut(o) \cup \{ o \} )$.
Let $C_4'=1+\tilde{C_4}+\tilde{C_4}^2+\tilde{C_4'}$. Then
\begin{eqnarray*}
\triangle r & =&  \triangle_b r + TTr \leq  \frac{C_4'}{r} \\
|d r|^2 & = & |d_b r |^2 + (Tr)^2 \leq C_4'
\end{eqnarray*}
on $M \setminus ( cut(o) \cup \{ o \} )$ and where $r \geq 1$.
\end{proof}

To derive the Reeb energy density estimate, we need an analogue estimate of \eqref{nd1}. Assume that $(N,h)$ is a simply connected Riemannian manifold with nonpositive sectional curvature. Let $\rho$ be the distance to a fixed point $y_0 \in N$.
If $f: M \rightarrow N$ is harmonic, the Hession comparison theorem implies
\begin{equation} \label{nd2}
\triangle (\rho^2 \circ f) \geq 2|d f|^2.
\end{equation}

\begin{thm} \label{csh}
Let $(M, HM, J_b, \theta)$ be a noncompact complete Sasakian manifold with CR Laplace comparison property relative to a fixed point $x_0$ and
$$Ric(X,X) \geq -k |X|^2$$
for any $X \in T_{1,0} M$, and some $k \geq 0$. Suppose that $(N,h)$ is a simply connected Riemannian manifold with nonpositive sectional curvature. Let $f: M \rightarrow N$ be a harmonic map. Let $\rho $ be the Riemannian distance to $y_0 = f(x_0)$. For any $R>1$, set $b_R =2 \: sup \: \{ \rho \circ f(x) | x \in B_{2R} (x_0)\}$. Then, on $B_R (x_0)$
\begin{equation} \label{he}
|df(T)|^2  \leq C_6 \: b_R^2 \: \left(\frac{1}{R^2}+ \frac{\sqrt{k}}{R}\right)
\end{equation}
where the constant $C_6$ depends only on $C_4$. Moreover,
\begin{enumerate}[(i)]
\item if $Ric \geq 0$ (i.e. $k=0$) and the image of f satisfies
\begin{equation*}
\overline{\lim_{R \rightarrow \infty}} R^{-1} \: sup \: \{ \rho \circ f(x) | x \in B_{2R}(x_0) \} =0,
\end{equation*}
then $df(T)=0$.
\item if  the pseudohermitian Ricci curvature of $M$ has strictly negative lower bound (i.e. $k>0$) and the image of f satisfies
\begin{equation*}
\overline{\lim_{R \rightarrow \infty}} R^{-\frac{1}{2}} \: sup \: \{ \rho \circ f(x) | x \in B_{2R} (x_0)\} =0,
\end{equation*}
then $df(T)=0$.
\end{enumerate}
\end{thm}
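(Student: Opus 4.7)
The plan is to follow the same maximum principle argument as in Theorem \ref{nsg}, with two simplifications. First, since we only need to estimate $|df(T)|^2 = |f_0|^2$, the auxiliary function can involve $|f_0|^2$ alone, without the extra coupling $|d_b f|^2 + \mu\eta|f_0|^2$ that was needed in Section \ref{slp} to absorb the cross term $\langle d_b f\circ J_b,\nabla_b^f f_0\rangle$ arising from the pseudoharmonic Bochner formula. Second, because the Bochner-type inequality \eqref{bte3} is phrased for the full Laplacian $\triangle$, I work with $\triangle$ throughout; this is precisely why the hypothesis has been strengthened to CR Laplace comparison rather than its sub-Laplace analogue.

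Concretely, I would set
\[
F(x) = \frac{|f_0|^2(x)}{b_R^2 - (\rho^2\circ f)(x)} \qquad \text{on } B_{2R}(x_0),
\]
and use the cutoff $\eta = \psi(r/R)$ as in Section \ref{slp}. Using CR Laplace comparison $\triangle r \leq C_4(1/r + \sqrt{k})$ together with the gradient bound $|dr|_{g_\theta} \leq C_4$, a routine computation gives
\[
\eta^{-1}|d\eta|^2 \leq \frac{C_5}{R^2}, \qquad \triangle\eta \geq -C_5\left(\frac{1}{R^2} + \frac{\sqrt{k}}{R}\right)
\]
off the cut locus of $x_0$. The same Calabi-type modification $\tilde r(z) = d_c(z,\gamma(\epsilon))+\epsilon$ used in Section \ref{slp} allows us to assume that $r$ is smooth near any prospective maximum point $x$ of $\eta F$ in $B_{2R}(x_0)$.

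At such a nonzero maximum point $x$ the maximum principle yields $d\ln(\eta F) = 0$ and $\triangle\ln(\eta F) \leq 0$. The first-order condition rewrites $d|f_0|^2/|f_0|^2$ as $-d\eta/\eta - d(\rho^2\circ f)/(b_R^2 - \rho^2\circ f)$, so Schwarz gives
\[
\frac{|d|f_0|^2|^2}{|f_0|^4} \leq \frac{2|d\eta|^2}{\eta^2} + \frac{2|d(\rho^2\circ f)|^2}{(b_R^2 - \rho^2\circ f)^2}.
\]
From Lemma \ref{sb1} and the pointwise inequality $|d|f_0|^2|^2 \leq 4|f_0|^2|\nabla^f f_0|^2$ I obtain $\triangle|f_0|^2/|f_0|^2 \geq |d|f_0|^2|^2/(2|f_0|^4)$, so the bad term $\triangle|f_0|^2/|f_0|^2 - |d|f_0|^2|^2/|f_0|^4$ is bounded below by $-|d\eta|^2/\eta^2 - |d(\rho^2\circ f)|^2/(b_R^2-\rho^2\circ f)^2$. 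Substituting this and the Hessian comparison estimate \eqref{nd2}, namely $\triangle(\rho^2\circ f) \geq 2|df|^2 \geq 2|f_0|^2$, into the expansion of $\triangle\ln(\eta F)$ makes the $|d(\rho^2\circ f)|^2$ terms cancel, and multiplying by $\eta$ yields
\[
0 \geq -C_5\left(\frac{1}{R^2} + \frac{\sqrt{k}}{R}\right) - \frac{2C_5}{R^2} + 2\eta F(x).
\]
Hence $\eta F(x) \leq C_6(1/R^2 + \sqrt{k}/R)$; since $\eta \equiv 1$ on $B_R(x_0)$, the estimate \eqref{he} follows after absorbing constants.

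The Liouville-type conclusions (i) and (ii) are then immediate: letting $R \to \infty$ in \eqref{he} under the prescribed growth hypothesis on $b_R$ forces $|df(T)|$ to vanish at each fixed point, since every point of $M$ eventually lies in $B_R(x_0)$. The main obstacle is purely technical, namely the bookkeeping of the cutoff estimates and the choice of Schwarz constants; no delicate balancing between independent terms is required, because the single-function auxiliary $F$ eliminates the interplay between $\mu$, $\eta$, $|d_b f|^2$ and $|f_0|^2$ that complicated the proof of Theorem \ref{nsg}.
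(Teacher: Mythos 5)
Your proposal is correct and follows essentially the same route as the paper: the same single-function auxiliary $G=|f_0|^2/(b_R^2-\rho^2\circ f)$ with the cutoff $\eta=\psi(r/R)$, the same use of Lemma \ref{sb1} together with $|d|f_0|^2|^2\leq 4|f_0|^2|\nabla^f f_0|^2$ to absorb half of the gradient term, the same first-order Schwarz step, and the same cancellation against $\triangle(\rho^2\circ f)\geq 2|df|^2\geq 2|f_0|^2$. The resulting bound $\eta G\leq \tfrac{3C_5}{2}(R^{-2}+\sqrt{k}R^{-1})$ and the limiting argument for (i) and (ii) match the paper's proof.
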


\begin{rmk}
In \cite{p}, R. Petit got a similar vanishing theorem for harmonic maps from compact Sasakian manifolds to Riemannian manifolds with nonpositive sectional curvature.
\end{rmk}

\begin{proof}
The choices of $\psi$ and $\eta$ are the same as in Section \ref{slp}. Since $(M, HM, J_b, \theta)$ satisfies CR Laplace comparison property, then $\eta$ satisfies
\begin{equation}
\begin{gathered}
\eta^{-1} |d \eta|^2  \leq  \frac{C_5}{R^2}  \\
\triangle \eta  = \frac{\psi ''}{R^2} |d r|^2 + \frac{\psi '}{R} \triangle r \geq - C_5 \: \left(\frac{1}{R^2}+ \frac{\sqrt{k}}{R}\right) \label{etah}
\end{gathered}
\end{equation}
\noindent on $M \setminus ( cut(x_0) \cup \{ x_0 \} )$. Here $C_5 $ depends only on $C_4$ and $C_2$.

Given $R >1$, we consider the function $G: M \rightarrow \mathbb{R}$, which is given by
$$ G(x) = \frac{|f_0|^2}{b_R^2-\rho^2 \circ f} (x) . $$
Let $x$ be a maximum point of $\eta G$ on $B_{2R} (x_0)$. If $x$ is in the cut locus of $x_0$, then we can modify $r$ as in Section \ref{slp}. Without loss of generality, assume that $r$ is smooth at $x$ and $ (\eta G)(x) \neq 0$. It is obvious that $x$ is still a maximum point of $\ln (\eta G)$ on $B_{2R} (x_0)$. Then the maximum principle asserts that at $x$,
\begin{align}
0 \  = d \ln (\eta G)= &\frac{d \eta}{\eta} + \frac{d |f_0|^2}{|f_0|^2} + \frac{d (\rho^2 \circ f)}{b^2_R- \rho^2 \circ f}, \label{hmp1}\\
0 \geq \triangle \ln (\eta G)= & \frac{\triangle \eta}{\eta} - \frac{|d \eta|^2}{\eta^2} + \frac{\triangle |f_0|^2}{|f_0|^2} - \frac{|d |f_0|^2|^2}{|f_0|^4} \nonumber \\
  &   \qquad+\frac{\triangle (\rho^2 \circ f)}{b^2_R-\rho^2 \circ f} +\frac{|d (\rho^2 \circ f)|^2}{(b^2_R- \rho^2 \circ f)^2}. \label{hmp2}
\end{align}
Applying \eqref{bte3} and the inequality $|d |f_0|^2|^2  \leq  4 \: |f_0|^2 |\nabla^f f_0|^2$ to \eqref{hmp2},
we have
\begin{equation*}
0   \geq  \frac{\triangle \eta}{\eta} - \frac{|d \eta|^2}{\eta^2} -\frac{1}{2} \frac{|d |f_0|^2|^2}{|f_0|^4} +\frac{|d (\rho^2 \circ f)|^2}{(b^2_R- \rho^2 \circ f)^2}+\frac{\triangle (\rho^2 \circ f)}{b^2_R-\rho^2 \circ f}.
\end{equation*}
With the aid of Schwarz inequality, we can use \eqref{hmp1} to estimate the third and fourth terms. The result is
\begin{equation*}
0 \geq \frac{\triangle \eta}{\eta} -2\: \frac{|d \eta|^2}{\eta^2} +\frac{\triangle (\rho^2 \circ f)}{b^2_R-\rho^2 \circ f}.
\end{equation*}
Therefore combining with \eqref{nd2} and \eqref{etah}, we conclude that at $x$,
\begin{equation*}
\frac{|d f|^2}{b^2_R-\rho^2 \circ f} \leq \frac{3 C_5}{ 2\eta } \left(\frac{1}{R^2}+ \frac{\sqrt{k}}{R}\right).
\end{equation*}
Hence by $|f_0|^2 \leq |d f|^2$, we can get an estimate of $\eta G$:
\begin{equation*}
\max_{z \in B_{2R}(x)} \frac{\eta |f_0|^2}{b^2_R-\rho^2 \circ f} \: (z) = (\eta G)(x) = \frac{\eta |f_0|^2}{b^2_R-\rho^2 \circ f} \: (x) \leq \frac{3 C_5}{2} \: \left(\frac{1}{R^2}+ \frac{\sqrt{k}}{R}\right).
\end{equation*}
This yields for any $z \in B_R (x_0)$,
\begin{equation*}
|f_0|^2 \: (z) \leq \frac{3 C_5}{2} \; (b^2_R-\rho^2 \circ f (z)) \left( \frac{1}{R^2}+ \frac{\sqrt{k}}{R} \right) \leq \frac{3 C_5}{2} \: b^2_R \: \left(\frac{1}{R^2}+ \frac{\sqrt{k}}{R}\right).
\end{equation*}
Let $C_6= \frac{3}{2} C_5$. The above inequality yields \eqref{he}.  The rest of this theorem follows from the estimate \eqref{he}.
\end{proof}

The relation \eqref{phr} shows that if $df(T) =0$, then harmonic map is equivalent to pseudoharmonic map. Therefore, Theorem \ref{cse} asserts the following Liouville theorem.

\begin{cor}
Let $(M, HM, J_b, \theta)$ be a noncompact complete Sasakian manifold with nonnegative pseudohermitian Ricci curvature, and satisfy both CR sub-Laplace comparison property and CR Laplace comparison property relative to a fixed point $x_0 \in M$. Suppose that $(N,h)$ is a simply connected Riemannian manifold with nonpositive sectional curvature. Assume that $f: M \rightarrow N$ is a harmonic map. Let $\rho$ be the Riemnnian distance to $y_0 = f(x_0)$. If the image of $f$ satisfies
\begin{equation*}
\overline{\lim_{R \rightarrow \infty}} R^{-1} \: sup \: \{ \rho \circ f(x) | x \in B_{2R}(x_0) \} =0,
\end{equation*}
then f is a constant map.
\end{cor}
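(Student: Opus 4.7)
The plan is to combine the two Liouville-type results already established, namely Theorem \ref{csh} for harmonic maps and Theorem \ref{cse} for pseudoharmonic maps, bridging them via the identity \eqref{phr}. The hypothesis gives us $k=0$ together with the CR Laplace comparison property, which puts us exactly in case (i) of Theorem \ref{csh}. That theorem produces an estimate $|df(T)|^2 \leq C_6 b_R^2 / R^2$ on $B_R(x_0)$, and the growth condition $\overline{\lim}_{R\to\infty} R^{-1}\sup\{\rho\circ f(x): x\in B_{2R}(x_0)\}=0$ forces the right-hand side to tend to zero as $R\to\infty$. Letting $R\to\infty$ pointwise yields $df(T)=0$, i.e.\ $f_0\equiv 0$ on $M$.

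Once $f_0\equiv 0$, the identity \eqref{phr} specializes to
\begin{equation*}
\tau^\theta(f;\theta,\hat{\nabla}) = \tau(f;\theta,\hat{\nabla}) + \nabla_T^f\,df(T) = \tau(f;\theta,\hat{\nabla}),
\end{equation*}
and since $f$ is harmonic the left side vanishes, so $\tau(f;\theta,\hat{\nabla})=0$. Hence $f$ is in addition a pseudoharmonic map.

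With $f$ now known to be pseudoharmonic and $M$ satisfying the CR sub-Laplace comparison property with $Ric\geq 0$, the hypotheses of Theorem \ref{cse} are met. The same growth condition on the image of $f$ is precisely the one needed to conclude from Theorem \ref{cse} that $f$ is constant, which finishes the proof.

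The argument is essentially a bookkeeping step rather than a technical one; the only thing to check carefully is that the single growth hypothesis $\overline{\lim} R^{-1}\sup\rho\circ f = 0$ suffices to trigger both Theorem \ref{csh} (case $k=0$) and Theorem \ref{cse}. Since both theorems use precisely this growth rate when $k=0$, no sharper decay is required, and the two hypotheses on $M$ (CR Laplace and CR sub-Laplace comparison) are exactly what the two ingredient theorems demand respectively. The main conceptual point to highlight is that harmonicity plus vanishing Reeb derivative collapses to pseudoharmonicity, which is the mechanism enabling the reduction.
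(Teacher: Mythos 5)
Your proposal is correct and follows exactly the route the paper intends: Theorem \ref{csh}(i) with $k=0$ gives $df(T)=0$, the identity \eqref{phr} then shows the harmonic map is pseudoharmonic, and Theorem \ref{cse} concludes that $f$ is constant. This matches the paper's own (briefly sketched) argument in the sentence preceding the corollary, with the bookkeeping details correctly filled in.
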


Proposition \ref{hp1} and Proposition \ref{hp2} state that Heisenberg group satisfies both CR sub-Laplace comparison property and CR Laplace comparison property relative to the origin.

\begin{cor} \label{hm2}
There is no bounded harmonic map from Heisenberg group $(\mathbb{H}^n, H \mathbb{H}^n, J_b, \theta)$ to a simply connected Riemannian manifold with nonpositive sectional curvature.
\end{cor}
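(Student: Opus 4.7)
The plan is to deduce this as a direct application of the preceding Corollary by checking that $\mathbb{H}^n$ satisfies all of its hypotheses and that boundedness of $f$ makes the growth assumption trivially true.

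First I would verify the structural hypotheses on the source. The example in Section \ref{crb} shows that $(\mathbb{H}^n, H\mathbb{H}^n, J_b, \theta)$ is a noncompact complete Sasakian manifold and that its Tanaka-Webster connection is flat. In particular, the pseudohermitian Ricci tensor vanishes identically, so the hypothesis of nonnegative pseudohermitian Ricci curvature holds (with $k=0$). Proposition \ref{hp1} gives the CR sub-Laplace comparison property relative to the origin $o$, and Proposition \ref{hp2} gives the CR Laplace comparison property relative to $o$. Thus $\mathbb{H}^n$, with base point $x_0 = o$, satisfies every condition imposed on $M$ in the preceding Corollary.

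Next, given a bounded harmonic map $f: \mathbb{H}^n \to N$ into a simply connected Riemannian target of nonpositive sectional curvature, I would set $y_0 = f(o)$ and let $\rho$ be the Riemannian distance to $y_0$ in $N$. Boundedness of $f$ means that $B := \sup_{x \in \mathbb{H}^n} \rho \circ f(x) < \infty$, hence $b_R \leq 2B$ for every $R > 1$, and so
\begin{equation*}
\overline{\lim_{R \rightarrow \infty}} \, R^{-1} \sup \{ \rho \circ f(x) \mid x \in B_{2R}(o) \} \;\leq\; \lim_{R \to \infty} \frac{B}{R} \;=\; 0.
\end{equation*}
The growth hypothesis of the preceding Corollary is therefore satisfied, and invoking that Corollary yields that $f$ is constant, which contradicts nothing only if no such nonconstant $f$ exists, i.e.\ every bounded harmonic map $\mathbb{H}^n \to N$ must reduce to a point map.

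Because this is a direct specialization of an already-proved result, there is no genuine obstacle; the only care needed is in citing flatness of the Tanaka-Webster connection on $\mathbb{H}^n$ to ensure both $k = 0$ and the Sasakian condition are in force, and in pairing Propositions \ref{hp1} and \ref{hp2} to obtain simultaneously the sub-Laplace and Laplace comparison properties required.
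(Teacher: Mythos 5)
Your proposal is correct and follows essentially the same route as the paper: the paper likewise obtains Corollary \ref{hm2} by combining Propositions \ref{hp1} and \ref{hp2} (both comparison properties hold at the origin), the flatness of the Tanaka--Webster connection (so the pseudohermitian Ricci curvature is nonnegative), and the observation that boundedness of $f$ makes the growth condition in the preceding Liouville corollary trivially satisfied. No issues to report.
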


\begin{rmk}
If $n \geq 2$, then the Levi-Civita connection of  Heisenberg group $(\mathbb{H}^n, H \mathbb{H}^n, J_b, \theta)$ does not have nonnegative Ricci curvature. Thus Corollary \ref{hm2} can not be derived from the results in \cite{c}.
\end{rmk}

\section*{Acknowledgement}
The authors would like to express their thanks to Professor Yuxin Dong, Professor Qingchun Ji and Professor Jin-Hsin Cheng for constant encouragements and valuable discussions. Finally, we also wish to thank the referee for making the present form of the paper possible.

\end{document}